\numberwithin{equation}{section}
\theoremstyle{plain}
\newtheorem{theorem}{Theorem}
\newtheorem{lemma}{Lemma}
\newtheorem{proposition}{Proposition}
\theoremstyle{definition}
\newtheorem{remark}{Remark}
\newcommand{\vertiii}[1]{{\left\vert\kern-0.25ex\left\vert\kern-0.25ex\left\vert #1 
    \right\vert\kern-0.25ex\right\vert\kern-0.25ex\right\vert}}
\def \dv {\mathrm{div}}
\def \d {\mathrm{d}}
\title[Controllability and Inverse Problems for Parabolic Systems] %Use the shortened version of the full title
      {Controllability and Inverse Problems for Parabolic Systems with Dynamic Boundary Conditions}
\author{S. E. Chorfi}
\author{L. Maniar}
\address{S. E. Chorfi, L. Maniar, Faculty of Sciences Semlalia, Cadi Ayyad University, LMDP, IRD, UMMISCO, B.P. 2390, Marrakesh, Morocco}
\email{s.chorfi@uca.ac.ma, maniar@uca.ma}
\address{L. Maniar, The UM6P-Vanguard Center, University Mohammed VI Polytechnic, Benguerir, Morocco}
\email{Lahcen.Maniar@um6p.ma}
\subjclass[2020]{93B05, 93B07, 35R30, 35R25, 35K20}
\keywords{Null controllability, inverse problem, parabolic equation, dynamic boundary conditions, Carleman estimate}
\begin{document}
\begin{abstract}
This review surveys previous and recent results on null controllability and inverse problems for parabolic systems with dynamic boundary conditions. We aim to demonstrate how classical methods such as Carleman estimates can be extended to prove null controllability for parabolic systems and Lipschitz stability estimates for inverse problems with dynamic boundary conditions of surface diffusion type. We mainly focus on the substantial difficulties compared to static boundary conditions. Finally, some conclusions and open problems will be mentioned.
\end{abstract}
\dedicatory{\large Dedicated to the memory of Professor Hammadi Bouslous}
\maketitle

\section{Introduction and motivation}
In this work, we review relevant results on parabolic systems with dynamic boundary conditions within the framework of null controllability and inverse problems, which are well-studied in the case of static boundary conditions (Dirichlet, Neumann, Robin). A typical example of these systems is given by
\begin{empheq}[left = \empheqlbrace]{alignat=2}
\begin{aligned}\label{eq:intro}
&\partial_{t}y -d\Delta y  = f  & \qquad\quad &\text{in } \Omega_T,\\
&\partial_{t}y_\Gamma  -\delta\Delta_\Gamma y_\Gamma +d\partial_\nu y  =  g  & &\text{on } \Gamma_T, \\
& y_\Gamma=y_{|\Gamma} & &\text{on } \Gamma_T, \\
&(y,y_\Gamma)|_{t=0}  = (y_0, y_{0,\Gamma})  & &\text{in } \Omega\times \Gamma,
\end{aligned}
\end{empheq}
where $T>0$, $\Omega\subset \mathbb{R}^N$ ($N\in\mathbb{N}$) is a bounded domain with smooth boundary $\Gamma = \partial\Omega$, $E_T:=(0,T)\times E$ for any subset $E\subset \mathbb{R}^N.$ Moreover, $y_0\in L^2(\Omega)$ and $y_{0,\Gamma}\in L^2(\Gamma)$ are, respectively, initial data in the bulk and on the boundary, which are not necessarily interrelated. The constants $d>0$ and $\delta\ge 0$ are the diffusion coefficients. In addition, $y_{|\Gamma}$ stands for the trace of a smooth function $y:\Omega\to \mathbb{R}$, $\nu$ is the outer unit normal field, $\partial_\nu$ is the normal derivative on $\Gamma$, and $\Delta_\Gamma$ denotes the Laplace-Beltrami operator on $\Gamma$. 

Dynamic boundary conditions refer to the presence of the time derivative of the state on the boundary (Equation $\eqref{eq:intro}_2$). They are also known as generalized Wentzell boundary conditions. In recent years, dynamic boundary conditions have garnered significant attention in both theoretical and applied communities; see for instance \cite{EMR,FGGR,Gal15,Hi89,SW,VV}. These boundary conditions are efficient in modeling several phenomena in applied sciences such as heat transfer, fluid mechanics, population dynamics, biology, and cell chemistry, among other fields; see the works \cite{Eg18,FH11,Glitzky,GM13,La32} for some intriguing applications. For a physical interpretation and derivation of these boundary conditions, readers can consult the seminal paper \cite{Gold}. Additionally, the paper \cite{Sa20} provides a new derivation utilizing the Carslaw-Jaeger relation.

\subsection{Literature on controllability problems with dynamic boundary conditions}
Up to our knowledge, the first work dealing with null controllability issues for parabolic equations with a dynamic boundary condition is \cite{CG12}. The authors developed a Carleman estimate involving an artificial boundary condition with the time derivative to prove the uniform boundary controllability of the corresponding system. However, they limited their study to the one-dimensional case and addressed some challenges of controllability in higher dimensions for such systems. The paper \cite{KN04} applied semigroup theory to achieve approximate controllability of a one-dimensional heat equation with dynamic boundary conditions via boundary control. An optimal control problem has been investigated in \cite{HKR}. Later, the authors of \cite{MMS17} demonstrated the distributed null controllability of multidimensional linear and semilinear heat equations with dynamic boundary conditions of surface diffusion type. They developed a new Carleman estimate specific to dynamic boundary conditions. Next, the paper \cite{KM20} extended the results to convection-diffusion equations with non-smooth drift terms both in the bulk and on its boundary. Furthermore, \cite{KMO22} investigated a similar problem for the semilinear heat equation with dynamic boundary conditions and general nonlinearities involving drift terms. The cost of approximate controllability has been studied in \cite{BCMO21}. Some time and norm optimal control problems have been considered in \cite{BMO22}. Later, hierarchical control problems within the framework of Stackelberg-Nash null controllability have been studied in \cite{BMO220}. Recently, \cite{CGKM23} examined the boundary null controllability of the heat equation with dynamic boundary conditions. The main result was achieved through a new boundary Carleman estimate and regularity estimates for the homogeneous adjoint system. The distributed null controllability of a stochastic counterpart has been proven in \cite{BBEM23}. Furthermore, discrete Carleman estimates for a fully discrete system applied to controllability have been developed in \cite{LMPZ23}. The paper \cite{CMM23} is devoted to the local null controllability of a cubic Ginzburg-Landau equation using a Carleman estimate for the linearized system along with an inverse function theorem. A similar approach has been used in \cite{Et24} for a general quasilinear system. In \cite{ZYG19}, the authors investigated an insensitizing control problem with $L^2$-norm of the state. The work \cite{SCM24} studies an insensitizing control problem involving tangential gradient terms. Regarding coupled systems, we refer to the recent work \cite{JM24}. As for impulsive controllability, we refer to the works \cite{CGMZ23I} and \cite{CGMZ22} by logarithmic convexity and Carleman commutator approaches. Partial results have been obtained for a singular equation with inverse square potential in \cite{BEOB23}. Finally, the paper \cite{CGMZ23F} investigated finite-time stabilization and impulsive controllability using logarithmic convexity and spectral techniques. Although we focus on parabolic equations, we refer to recent works \cite{MM23}, \cite{CGMZ23L}, and \cite{GT17} for the controllability of Schrödinger and wave equations with a dynamic boundary condition.

\subsection{Literature on inverse problems with dynamic boundary conditions}
There are few recent attempts to study inverse parabolic problems with dynamic boundary conditions. We mention the paper \cite{Sl15} for the identification of a time-dependent source from the knowledge of a space average using the backward Euler method, and \cite{Is18}, for the determination of a time-dependent source term from an integral overdetermination condition in a one-dimensional heat equation by using the generalized Fourier method. In \cite{ACMO21}, the authors have obtained a Lipschitz stability estimate for the inverse source problem via a new Carleman estimate for a general anisotropic system. We refer to the survey paper \cite{Ya09} for inverse problems with static boundary conditions by Carleman estimates. Next, the paper \cite{ACM22} has demonstrated Lipschitz stability for the radiative potentials and logarithmic stability for the initial temperatures using the logarithmic convexity method. An inverse coefficient problem has been considered in \cite{ACM22S}. The authors proved Lipschitz stability for interior and boundary potentials in a coupled semilinear system by a single observation component. As for numerical studies, we refer to \cite{ACM22I} for the identification of source terms by a gradient-type iterative method. In addition, the paper \cite{CGMZ23} has numerically studied the identification of unknown initial temperatures from the final overdetermination using a conjugate gradient method. The paper \cite{BFBI23} addressed the existence, uniqueness and a numerical method for an inverse problem from integral overdetermination involving a time-dependent heat source in a one-dimensional equation with nonlocal Wentzel-Neumann boundary conditions. Recently, the article \cite{Ra24} presented another numerical method for a similar problem using finite difference and spectral methods.

The remainder of the paper is organized as follows. In Section \ref{sec2}, we discuss the problems of well-posedness, distributed controllability, and boundary controllability associated with system \eqref{eq:intro}. Section \ref{sec3} is devoted to some inverse problems related to \eqref{eq:intro}. We first discuss the case of internal measurement and then give some partial new results for boundary measurement. Finally, we conclude our paper with Section \ref{sec4} which presents some open questions related to dynamic boundary conditions that deserve to be resolved.

\section{Null controllability} \label{sec2}
%\subsection{Well-posedness}
Let us start by briefly discussing the well-posedness of system \eqref{eq:intro} when $\delta>0$. The details can be found in \cite{MMS17}. The Lebesgue measure on $\Omega$ and the surface measure on $\Gamma$ will be respectively denoted by $\d x$ and $\d S$. We consider the real Hilbert spaces
\begin{align*}
\mathbb{L}^2 &:= L^2(\Omega)\times L^2(\Gamma), \qquad \mathbb{L}^2_T :=L^2(\Omega_T)\times L^2(\Gamma_T),\\
\mathbb{H}^k &:= \{(y,y_\Gamma)\in H^k(\Omega) \times H^k(\Gamma)\,:\, y_\Gamma=y_{|\Gamma}\}\quad\text{for }k\in\mathbb{N},\\
\mathbb{E}_1(t_0,t_1) &:= H^1\left(t_0,t_1; \mathbb{L}^2\right) \cap L^2\left(t_0,t_1; \mathbb{H}^2\right) \quad \text{for } t_1>t_0 \text{ in } \mathbb{R}, \qquad \mathbb{E}_1:=\mathbb{E}_1(0,T).
\end{align*}
The system \eqref{eq:intro} can be written as an abstract Cauchy problem
\begin{numcases}{\text{(ACP)\qquad}\label{acp}}
\hspace{-0.1cm} \partial_t Y=\mathcal{A} Y+ \mathcal{F}, \quad 0<t<T, \nonumber\\
\hspace{-0.1cm} Y(0)=\left(y_0, y_{0,\Gamma}\right), \nonumber
\end{numcases}
where $Y:=\left(y,y_{\Gamma}\right)$, $\mathcal{F}=(f,g)$ and the linear operator $\mathcal{A}$ is given by
$$D(\mathcal{A}) = \mathbb{H}^2, \qquad \mathcal{A} =\left ( \begin{array}{cc} d\Delta & 0 \\ -d\partial_\nu\; & \delta \Delta_\Gamma
\end{array}\right).
$$
The operator $\mathcal{A}$ is self-adjoint, non-positive and generates an analytic $C_0$-semigroup $\left(\mathrm{e}^{t\mathcal{A}}\right)_{t\geq 0}$ on $\mathbb{L}^2$. Therefore, the system \eqref{eq:intro} is well-posed and satisfies the maximal regularity in $\mathbb{L}^2$.

\subsection{Distributed control}
In this section, we discuss the distributed (or internal) null controllability of the controlled system with dynamic boundary conditions
\begin{empheq}[left = \empheqlbrace]{alignat=2}
\begin{aligned}\label{eq1}
&\partial_{t}y -d\Delta y  = \mathds{1}_\omega(x) v(t,x)   & \qquad\quad &\text{in } \Omega_T,\\
&\partial_{t}y_\Gamma  -\delta\Delta_\Gamma y_\Gamma +d\partial_\nu y  =  0  & &\text{on } \Gamma_T, \\
& y_\Gamma=y_{|\Gamma} & &\text{on } \Gamma_T, \\
&(y,y_\Gamma)|_{t=0} = (y_0, y_{0,\Gamma})  & &\text{in } \Omega\times \Gamma,
\end{aligned}
\end{empheq}
where the control region $\omega \Subset \Omega$ is an 
arbitrary nonempty open subset which is strictly contained in $\Omega$ (i.e., $\overline{\omega}
\subset \Omega$), and $\mathds{1}_\omega$ the indicator function of $\omega$. We seek a control $v\in L^2(\omega_T)$ such that the solution of \eqref{eq1} satisfies
\begin{equation}\label{eqc}
    y(T,\cdot) =0\; \text{ in } \Omega \quad \text{ and } \quad y_\Gamma(T,\cdot) =0\; \text{ on } \Gamma.
\end{equation}
We state the main result on the null controllability of \eqref{eq1}.
\begin{theorem} \label{thm:intro} 
Assume that $\delta>0$. For all $T > 0$, all nonempty open set $\omega \Subset \Omega$ and all initial data
$(y_0, y_{0,\Gamma})\in \mathbb{L}^2$ there exists a control $v\in L^2(\omega_T)$ such that the unique mild solution $(y,y_{\Gamma})$ of \eqref{eq1} satisfies \eqref{eqc}.
\end{theorem}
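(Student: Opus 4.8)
The plan is to follow the by-now-classical duality approach to null controllability (Fursikov--Imanuvilov, Lebeau--Robbiano): null controllability of \eqref{eq1} is equivalent to an observability inequality for the homogeneous adjoint system
\begin{empheq}[left = \empheqlbrace]{alignat=2}
\begin{aligned}
&-\partial_{t}\psi -d\Delta \psi  = 0   & \qquad\quad &\text{in } \Omega_T,\\
&-\partial_{t}\psi_\Gamma  -\delta\Delta_\Gamma \psi_\Gamma +d\partial_\nu \psi  =  0  & &\text{on } \Gamma_T, \\
& \psi_\Gamma=\psi_{|\Gamma} & &\text{on } \Gamma_T, \\
&(\psi,\psi_\Gamma)|_{t=T} = (\psi_T, \psi_{T,\Gamma})  & &\text{in } \Omega\times \Gamma,
\end{aligned}
\end{empheq}
namely the estimate
\[
\|(\psi(0),\psi_\Gamma(0))\|_{\mathbb{L}^2}^2 \le C \int_{\omega_T} |\psi|^2\, \d x\, \d t .
\]
Since $\mathcal{A}$ generates an analytic $C_0$-semigroup on $\mathbb{L}^2$ and the problem is linear and autonomous, once this observability inequality is established one recovers the control by the standard penalized (HUM) functional
\[
J_\varepsilon(\psi_T,\psi_{T,\Gamma}) = \frac12 \int_{\omega_T}|\psi|^2\, \d x\, \d t + \varepsilon \|(\psi_T,\psi_{T,\Gamma})\|_{\mathbb{L}^2} + \big\langle (y_0,y_{0,\Gamma}), (\psi(0),\psi_\Gamma(0))\big\rangle_{\mathbb{L}^2},
\]
minimizing over $(\psi_T,\psi_{T,\Gamma})\in\mathbb{L}^2$, and passing to the limit $\varepsilon\to 0$; coercivity of $J_\varepsilon$ is exactly the content of the observability inequality, and the resulting control lies in $L^2(\omega_T)$ with a bound in terms of the observability constant and $\|(y_0,y_{0,\Gamma})\|_{\mathbb{L}^2}$.

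The core of the argument, and the main obstacle, is proving the observability inequality, which I would derive from a global Carleman estimate adapted to the dynamic boundary condition. Following \cite{MMS17}, one chooses a Fursikov--Imanuvilov-type weight: a function $\eta\in C^2(\overline\Omega)$ with $\eta>0$ in $\Omega$, $\eta=0$ on $\Gamma$, $\partial_\nu\eta<0$ on $\Gamma$, and no critical points outside $\omega_0\Subset\omega$, then set $\varphi(t,x)=\frac{e^{\lambda(\eta(x)+m\|\eta\|_\infty)}}{t(T-t)}$ and the analogous singular-in-time factor $\alpha(t,x)=\frac{e^{\lambda m\|\eta\|_\infty}-e^{\lambda(\eta(x)+m\|\eta\|_\infty)}}{t(T-t)}$. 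The delicate point compared with Dirichlet or Neumann conditions is that integration by parts in the bulk Carleman computation produces boundary terms on $\Gamma_T$ that cannot simply be discarded; instead they must be absorbed using a \emph{coupled} Carleman estimate in which the surface equation for $\psi_\Gamma$ — itself a parabolic equation on the manifold $\Gamma$ with its own Laplace--Beltrami operator — contributes its own Carleman terms. One applies the Carleman machinery to the bulk equation and, separately, to the surface equation (using a tangential Carleman estimate on $\Gamma$ for the operator $-\partial_t-\delta\Delta_\Gamma$, which crucially requires $\delta>0$ so that the surface equation is genuinely parabolic), then adds the two estimates with a suitable weight so that the troublesome normal-derivative trace $\partial_\nu\psi$ and trace $\psi_{|\Gamma}$ boundary terms cancel or are absorbed into the dominant left-hand-side terms for large $\lambda$ and large $s$. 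This yields
\[
\int_{\Omega_T} s^3\lambda^4\varphi^3 e^{-2s\alpha}|\psi|^2 + \int_{\Gamma_T} s^3\lambda^4\varphi^3 e^{-2s\alpha}|\psi_\Gamma|^2 + (\text{gradient terms}) \le C \int_{\omega_T} s^3\lambda^4\varphi^3 e^{-2s\alpha}|\psi|^2,
\]
with $C$ independent of $s$ large and $\lambda$ large.

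From the Carleman estimate the observability inequality follows by the usual two-step procedure: first restrict the time integration on the left to a compact subinterval $[T/4,3T/4]$ where the weights $e^{-2s\alpha}$ and $\varphi^3$ are bounded above and below by positive constants, obtaining $\int_{T/4}^{3T/4}\|(\psi,\psi_\Gamma)\|_{\mathbb{L}^2}^2 \le C\int_{\omega_T}|\psi|^2$; second, use the dissipativity of the backward adjoint semigroup — that is, the energy identity $\frac{d}{dt}\|(\psi,\psi_\Gamma)(t)\|_{\mathbb{L}^2}^2 = 2d\|\nabla\psi\|_{L^2(\Omega)}^2 + 2\delta\|\nabla_\Gamma\psi_\Gamma\|_{L^2(\Gamma)}^2 \ge 0$, so $t\mapsto\|(\psi,\psi_\Gamma)(t)\|_{\mathbb{L}^2}$ is nonincreasing in backward time — to bound $\|(\psi(0),\psi_\Gamma(0))\|_{\mathbb{L}^2}^2 \le \|(\psi(t),\psi_\Gamma(t))\|_{\mathbb{L}^2}^2$ for any $t\in[T/4,3T/4]$, then integrate over that interval. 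Combining the two gives the observability inequality, and hence Theorem \ref{thm:intro}. The only genuinely new difficulty over the scalar heat equation is the careful bookkeeping of the boundary terms in the Carleman estimate and the need for the surface diffusion $\delta>0$; everything downstream is routine.
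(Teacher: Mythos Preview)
Your proposal is correct and follows essentially the same route as the paper: reduce null controllability to the final-state observability inequality \eqref{eq:observa} for the backward adjoint system \eqref{eq2} by duality, derive that inequality from the global Carleman estimate of Lemma~\ref{lem:carleman} (which is exactly the one from \cite{MMS17}, with the same Fursikov--Imanuvilov weights and the same reliance on $\delta>0$ to absorb the boundary tangential-gradient term), and then pass from Carleman to observability via restriction to $[T/4,3T/4]$ plus backward dissipation. The only differences are cosmetic (you spell out the penalized HUM functional and the energy-decay step, which the paper leaves implicit in its citation of the ``classical duality argument'').
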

By a classical duality argument (see \cite{TW}), the above null controllability result is equivalent to an observability inequality for the homogeneous backward system
\begin{empheq}[left = \empheqlbrace]{alignat=2}
\begin{aligned}\label{eq2}
&-\partial_t \varphi-d\Delta \varphi  = 0&  \qquad \quad &\text{in }\Omega_T,\\
&-\partial_t \varphi_\Gamma -\delta\Delta_\Gamma \varphi_\Gamma +d\partial_\nu \varphi =0 &  &\text{on }\Gamma_T,\\
& \varphi_\Gamma=\varphi_{|\Gamma} & &\text{on } \Gamma_T, \\
&(\varphi,\varphi_\Gamma)|_{t=T} = (\varphi_T, \varphi_{T,\Gamma}) & & \text{in }\Omega \times \Gamma.
\end{aligned}
\end{empheq}
That is, the following result.
\begin{proposition}\label{lem:observa} 
Assume that $\delta>0$. There exists a constant $C >0$ such that for all $(\varphi_T, \varphi_{T,\Gamma})\in \mathbb{L}^2$ the mild solution $(\varphi, \varphi_\Gamma)$ of the backward system \eqref{eq2} satisfies the following observability inequality
\begin{equation}
 \|(\varphi(0,\cdot),\varphi_\Gamma(0,\cdot))\|_{\mathbb L^2}^2  \leq C \int_{\omega_T} |\varphi|^2 \,\d x\,\d t.\label{eq:observa}
\end{equation}
 \end{proposition}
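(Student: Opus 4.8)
The plan is to obtain \eqref{eq:observa} from a global Carleman estimate for the adjoint system \eqref{eq2}, and then to pass from the (singular, blowing-up) Carleman weight to a clean observability constant by an energy argument. First I would fix a nonempty open set $\omega_0\Subset\omega$ and build a weight $\psi\in C^2(\overline{\OOO})$ with $\psi>0$ on $\overline{\OOO}$, $|\nabla\psi|>0$ on $\overline{\OOO\setminus\omega_0}$ and $\partial_\nu\psi\le 0$ on $\Gamma$. The feature specific to dynamic boundary conditions is that $\psi$ must \emph{not} vanish on $\Gamma$, so that the trace on $\Gamma$ of the Carleman weight stays comparable with its values in $\OOO$; a weight vanishing on the boundary (as in the Dirichlet case) would destroy the surface part of the estimate. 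With the usual singular factor $\theta(t)=\big(t(T-t)\big)^{-1}$ I would set $\alpha(t,x)=\theta(t)\big(e^{2\la\|\psi\|_\infty}-e^{\la\psi(x)}\big)$ and $\xi(t,x)=\theta(t)e^{\la\psi(x)}$, conjugate the equations by $e^{s\alpha}$, and work with $w=e^{-s\alpha}\varphi$, $w_\Gamma=e^{-s\alpha}\varphi_\Gamma$.

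The core step is the Carleman estimate: there exist $C,s_0,\la_0>0$ such that for all $s\ge s_0$, $\la\ge\la_0$ and all solutions $(\varphi,\varphi_\Gamma)$ of \eqref{eq2},
\begin{align*}
&\iint_{\OOO_T}\!\big(s\la^2\xi\,|\nabla\varphi|^2+s^3\la^4\xi^3|\varphi|^2\big)e^{-2s\alpha}\,\d x\,\d t
+\iint_{\Gamma_T}\!\big(s\la\,\xi\,|\nabla_\Gamma\varphi_\Gamma|^2+s^3\la^3\xi^3|\varphi_\Gamma|^2\big)e^{-2s\alpha}\,\d S\,\d t\\
&\qquad\le C\iint_{\omega_T}s^3\la^4\xi^3|\varphi|^2\,e^{-2s\alpha}\,\d x\,\d t .
\end{align*}
To prove it one splits the conjugated operator into its self-adjoint and skew-adjoint parts and integrates by parts following the Fursikov--Imanuvilov scheme. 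The departure from the static case is that the boundary integrals produced on $\Gamma$ no longer cancel: they carry the factor $\partial_\nu w$, which the second equation of \eqref{eq2} rewrites in terms of $\partial_t w_\Gamma$ and $\Delta_\Gamma w_\Gamma$. One must therefore run \emph{simultaneously} a Carleman estimate on $\Gamma$ for the surface operator $-\partial_t-\delta\Delta_\Gamma$ and combine it with the bulk estimate so that, for $s,\la$ large, all the coupling boundary integrals are absorbed by the dominant volume and surface terms. This coupling -- together with the bookkeeping of the numerous lower-order boundary integrals -- is the main obstacle, and it is precisely where $\delta>0$ and the non-vanishing of $\psi$ on $\Gamma$ are used (see \cite{MMS17}). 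A standard cut-off argument on $\omega_0\Subset\omega$ then upgrades the local gradient term on the right-hand side into the local term $\iint_{\omega_T}|\varphi|^2$.

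From here the observability inequality follows in two routine steps. Restricting the time integral to $[T/4,3T/4]$, where $\theta$, $\xi$ and $e^{-2s\alpha}$ are bounded from above and from below by positive constants, the left-hand side of the Carleman estimate dominates $\int_{T/4}^{3T/4}\|(\varphi(t,\cdot),\varphi_\Gamma(t,\cdot))\|_{\mathbb L^2}^2\,\d t$, while its right-hand side is $\le C\iint_{\omega_T}|\varphi|^2\,\d x\,\d t$. On the other hand, since $\mathcal A$ is self-adjoint and non-positive, along \eqref{eq2} one has $\frac{\d}{\d t}\|(\varphi(t,\cdot),\varphi_\Gamma(t,\cdot))\|_{\mathbb L^2}^2=-2\langle\mathcal A(\varphi,\varphi_\Gamma),(\varphi,\varphi_\Gamma)\rangle\ge 0$, so the map $t\mapsto\|(\varphi(t,\cdot),\varphi_\Gamma(t,\cdot))\|_{\mathbb L^2}^2$ is non-decreasing; in particular $\|(\varphi(0,\cdot),\varphi_\Gamma(0,\cdot))\|_{\mathbb L^2}^2\le\|(\varphi(t,\cdot),\varphi_\Gamma(t,\cdot))\|_{\mathbb L^2}^2$ for every $t\in[0,T]$. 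Integrating this bound over $[T/4,3T/4]$ and chaining it with the Carleman consequence above yields \eqref{eq:observa} with a constant $C=C(T,\OOO,\omega,d,\delta)$.
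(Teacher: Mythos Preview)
Your overall strategy coincides with the paper's: derive \eqref{eq:observa} from the global Carleman estimate (Lemma~\ref{lem:carleman}), then strip the weights by restricting the time integral to a compact subinterval such as $[T/4,3T/4]$ and use the monotonicity of $t\mapsto\|(\varphi(t),\varphi_\Gamma(t))\|_{\mathbb L^2}^2$ coming from the non-positivity of $\mathcal A$. The passage through the conjugated operator and the need for $\delta>0$ to absorb the coupling boundary integral are also correctly identified (cf.\ Remark~1).

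There is, however, one inaccurate claim that you should fix. You assert that the weight ``must \emph{not} vanish on $\Gamma$'' and that ``a weight vanishing on the boundary (as in the Dirichlet case) would destroy the surface part of the estimate''. The paper (following \cite{MMS17}) does exactly the opposite: it uses the classical Fursikov--Imanuvilov function $\eta$ with $\eta=0$ on $\Gamma$. This choice is harmless for the surface terms because on $\Gamma$ one then has $\mathrm e^{\lambda\eta}\equiv 1$, so $\xi|_\Gamma=(t(T-t))^{-1}$ and $\alpha|_\Gamma$ are constant in $x$ and strictly positive; in particular $\nabla_\Gamma\eta=0$, which actually \emph{simplifies} several boundary integrations by parts. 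What the dynamic boundary condition really requires is not a sign condition on $\psi|_\Gamma$ but the presence of the surface diffusion $\delta\Delta_\Gamma$ to absorb the term \eqref{termc}. Your variant with $\psi>0$ on $\overline{\Omega}$ can be made to work as well, but the justification you give for it is incorrect.
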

The key tool to show the above observability inequality is the following global Carleman estimate (see \cite{FI96,FG06} for static boundary conditions). Let us first recall some weight functions. Let $\omega' \Subset \Omega$ be a nonempty open subset. There exists a function $\eta\in C^2\left(\overline{\Omega}\right)$ such that (see \cite[Lemma 1.1]{FI96}):
\begin{align*}
&{\rm i)}\quad \eta> 0  \; \text{ in } \Omega, \quad \eta=0 \; \text{ on } \Gamma,\\
&{\rm ii)}\quad |\nabla\eta|  > 0 \quad\text{ in } \overline{\Omega\backslash\omega'},\\
&{\rm iii)}\quad \partial_\nu \eta \leq -c <0 \quad\text{ on } \Gamma
\end{align*}
for some constant $c>0$. Moreover, we consider \begin{equation}\label{w1}
\alpha(t,x)=\frac{\mathrm{e}^{2\lambda \|\eta\|_\infty}- \mathrm{e}^{\lambda \eta(x)}}{t(T -t)} \quad \text{ and } \quad \xi(t,x)=\frac{\mathrm{e}^{\lambda \eta(x)}}{t(T -t)}
\end{equation}
for all $(t,x)\in \overline{\Omega}_T$ and $\lambda \geq 1$ is a parameter (to be fixed later) which depends only on $\Omega$ and $\omega$.
Note that the functions $\alpha$ and $\xi$ are of class $C^2$, strictly positive on $\overline{\Omega}_T$ and blow up as $t\to 0$ and $t\to T$.
\begin{lemma}[Carleman estimate]\label{lem:carleman} 
Assume that $\delta>0$. Let $T>0$, $\omega \Subset  \Omega$ be nonempty and open. Then there exist constants $C>0$ and $\lambda_1,s_1 \ge 1$ such that 
{\small\begin{align}
 s^{-1} \int_{\Omega_T} &e^{-2s\alpha} \xi^{-1} (|\partial_t\varphi|^2 + |\Delta \varphi|^2)\,\d x\,\d t 
  + s^{-1}\int_{\Gamma_T}e^{-2s\alpha}\xi^{-1}(|\partial_t \varphi_\Gamma|^2 
   + |\Delta_\Gamma \varphi_\Gamma|^2)\,\d S\,\d t \notag\\
  & \qquad + s\lambda^2 \int_{\Omega_T} e^{-2s\alpha} \xi |\nabla \varphi|^2 \,\d x\,\d t + s\lambda \int_{\Gamma_T} e^{-2s\alpha} \xi |\nabla_\Gamma \varphi_\Gamma|^2 \,\d S\,\d t \notag\\
 &  \qquad + s^3\lambda^4\int_{\Omega_T} e^{-2s\alpha} \xi^3 |\varphi|^2 \, \d x\,\d t +  s^3\lambda^3 \int_{\Gamma_T} e^{-2s\alpha} \xi^3 |\varphi_\Gamma|^2 \, \d S\,\d t \notag\\
  & \qquad + s\lambda \int_{\Gamma_T} e^{-2s\alpha} \xi|\partial_\nu \varphi|^2 \,\d S\,\d t  \notag\\
&\leq C s^3\lambda^4 \int_{\omega_T} e^{-2s\alpha} \xi^3 |\varphi|^2 \, \d x\,\d t 
 + C \int_{\Omega_T} e^{-2s\alpha} |\partial_t\varphi \pm d\Delta \varphi|^2 \,\d x \,\d t  \notag\\
  &\qquad + C \int_{\Gamma_T} e^{-2s\alpha} |\partial_t\varphi_\Gamma \pm \delta \Delta_\Gamma \varphi_\Gamma 
    - d \partial_\nu \varphi|^2 \,\d S\,\d t \label{car1}
\end{align}}
for all $\lambda \geq \lambda_1$, $s \geq s_1$ and all $(\varphi,\varphi_\Gamma)\in \mathbb{E}_1$.
\end{lemma}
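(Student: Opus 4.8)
The plan is to adapt the Fursikov--Imanuvilov method to the coupled bulk/boundary structure. Fix an open set $\omega'$ with $\overline{\omega'}\subset\omega$ and let $\eta$ be the associated weight satisfying (i)--(iii). Both signs $\pm$ are handled simultaneously, so I fix one and write $f:=\partial_t\varphi\pm d\Delta\varphi$ in $\Omega_T$ and $g:=\partial_t\varphi_\Gamma\pm\delta\Delta_\Gamma\varphi_\Gamma-d\partial_\nu\varphi$ on $\Gamma_T$. First I would record the structural properties of the weights that drive the estimate: $\nabla\alpha=-\lambda\xi\nabla\eta$, $\nabla\xi=\lambda\xi\nabla\eta$, $|\partial_t\alpha|\le C\xi^2$, $|\partial_t^2\alpha|\le C\xi^3$, $\xi\ge 4T^{-2}$, and --- since $\eta\equiv 0$ on $\Gamma$ --- the crucial facts that $\alpha_{|\Gamma}$ and $\xi_{|\Gamma}$ depend only on $t$ (hence $\nabla_\Gamma(\alpha_{|\Gamma})=0$ and $\nabla\eta_{|\Gamma}=(\partial_\nu\eta)\,\nu$), while (iii) gives $\partial_\nu\alpha_{|\Gamma}=-\lambda\xi_{|\Gamma}\partial_\nu\eta\ge c\lambda\xi_{|\Gamma}>0$. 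I then set $\theta=e^{-s\alpha}$, $\psi=\theta\varphi$ and $\psi_\Gamma=\theta\varphi_\Gamma$; because $\theta_{|\Gamma}$ is a function of $t$ alone, $\psi_\Gamma=\psi_{|\Gamma}$, the compatibility condition persists, and $\Delta_\Gamma$ commutes with multiplication by $\theta_{|\Gamma}$. Conjugating, $P_s\psi:=\theta(\partial_t\pm d\Delta)(\theta^{-1}\psi)=\theta f$ in $\Omega_T$, while on $\Gamma_T$ the coupling term becomes $d\,\theta\,\partial_\nu\varphi=d\bigl(\partial_\nu\psi+s(\partial_\nu\alpha)\psi\bigr)_{|\Gamma}$, which already exhibits the favorable zeroth-order factor $d\,s(\partial_\nu\alpha)\psi_{|\Gamma}$.

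Next I would split $P_s=P_1+P_2$ into its formally self-adjoint part $P_1\psi=d\Delta\psi+ds^2|\nabla\alpha|^2\psi+s(\partial_t\alpha)\psi$ and its skew-adjoint part $P_2\psi=\partial_t\psi+2ds\,\nabla\alpha\cdot\nabla\psi+ds(\Delta\alpha)\psi$, and expand the identity $\|P_1\psi\|_{L^2(\Omega_T)}^2+\|P_2\psi\|_{L^2(\Omega_T)}^2+2\langle P_1\psi,P_2\psi\rangle_{L^2(\Omega_T)}=\|\theta f\|_{L^2(\Omega_T)}^2$ by integrating the cross term by parts in $x$ and $t$. The interior contributions yield the dominant positive quantities $s^3\lambda^4\int_{\Omega_T}\xi^3|\psi|^2$ and $s\lambda^2\int_{\Omega_T}\xi|\nabla\psi|^2$, with coefficients that are positive away from $\omega'$ thanks to (ii); the missing region $\omega'$ is recovered by the standard cutoff argument (a function $\chi\in C_c^\infty(\omega)$ with $\chi\equiv1$ on $\omega'$, together with the equation), producing the observation term $Cs^3\lambda^4\int_{\omega_T}e^{-2s\alpha}\xi^3|\varphi|^2$. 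The terms $s^{-1}\int_{\Omega_T}\xi^{-1}(|\partial_t\psi|^2+|\Delta\psi|^2)$ are then obtained by solving $d\Delta\psi=P_1\psi-(\cdots)$ and $\partial_t\psi=P_2\psi-(\cdots)$, multiplying by $s^{-1}\xi^{-1}$ and absorbing.

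The crux is the treatment of the boundary integrals on $\Gamma_T$ generated by the spatial integrations by parts above. Since $\nabla_\Gamma\eta=0$, all purely tangential weight-gradient terms drop out, and one is left with normal-derivative terms in which $-\partial_\nu\eta>0$ produces the favorable contributions $s^3\lambda^3\int_{\Gamma_T}e^{-2s\alpha}\xi^3|\varphi_\Gamma|^2$ and $s\lambda\int_{\Gamma_T}e^{-2s\alpha}\xi|\partial_\nu\varphi|^2$ appearing in \eqref{car1} (the latter has no analogue for Dirichlet or Neumann conditions and is exactly what (iii) buys). However, the piece $\langle\partial_t\psi,d\Delta\psi\rangle_{L^2(\Omega_T)}$ also generates the boundary integral $d\int_{\Gamma_T}(\partial_t\psi_\Gamma)(\partial_\nu\psi)$, which cannot be discarded as in the static case and forces the dynamic boundary condition into play: using $g$ and $\Delta_\Gamma(\theta_{|\Gamma}\,\cdot)=\theta_{|\Gamma}\Delta_\Gamma(\cdot)$ one has, on $\Gamma_T$, $d\,\partial_\nu\psi=\partial_t\psi_\Gamma\pm\delta\Delta_\Gamma\psi_\Gamma-\theta g+s(\partial_t\alpha)\psi_\Gamma-d\,s(\partial_\nu\alpha)\psi$, and after substituting this I would run a parallel Carleman-type computation for the boundary operator $\theta(\partial_t\pm\delta\Delta_\Gamma)(\theta^{-1}\,\cdot)$ on $\Gamma_T$ --- considerably simpler than the bulk one since $\theta_{|\Gamma}$ is $x$-independent --- to produce $s\lambda\int_{\Gamma_T}e^{-2s\alpha}\xi|\nabla_\Gamma\varphi_\Gamma|^2$ and $s^{-1}\int_{\Gamma_T}\xi^{-1}(|\partial_t\varphi_\Gamma|^2+|\Delta_\Gamma\varphi_\Gamma|^2)$, and to control the cross terms in $\partial_t\psi_\Gamma$ and $\Delta_\Gamma\psi_\Gamma$. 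The coupling term $d\partial_\nu\varphi$ that thereby enters the right-hand side of the boundary computation is absorbed into the bulk's boundary gain, since $\int_{\Gamma_T}e^{-2s\alpha}|\partial_\nu\varphi|^2\le\frac{T^2}{4s\lambda}\,s\lambda\int_{\Gamma_T}e^{-2s\alpha}\xi|\partial_\nu\varphi|^2$ once $s\lambda$ is large.

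Finally I would add the bulk and boundary inequalities, undo the conjugation (replacing the weighted $L^2$-norms of $\varphi,\varphi_\Gamma$ and their derivatives by those of $\psi,\psi_\Gamma$, up to lower-order terms, via $|\nabla e^{s\alpha}|\le Cs\lambda\xi\,e^{s\alpha}$), and absorb all remaining lower-order terms into the dominant ones by first fixing $\lambda\ge\lambda_1$ and then $s\ge s_1$ large enough. I expect the matching and absorption of the $\Gamma_T$ boundary terms --- eliminating $\partial_\nu\psi$ through the dynamic condition while tracking the conjugation corrections $s(\partial_t\alpha)\psi_\Gamma$ and $s(\partial_\nu\alpha)\psi$, and balancing the bulk contributions against the boundary ones --- to be the main obstacle, since this is precisely where dynamic boundary conditions demand an argument with no counterpart in the classical Dirichlet/Neumann/Robin theory.
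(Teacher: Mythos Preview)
Your overall strategy follows the Fursikov--Imanuvilov/MMS17 route correctly up to the bulk computation, but there is a genuine gap in the boundary analysis, and it is exactly the point the paper singles out.

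You claim that the parallel Carleman computation for the boundary operator $\theta(\partial_t\pm\delta\Delta_\Gamma)(\theta^{-1}\,\cdot)$ on $\Gamma_T$ will ``produce $s\lambda\int_{\Gamma_T}e^{-2s\alpha}\xi|\nabla_\Gamma\varphi_\Gamma|^2$''. It cannot. As you yourself observe, $\alpha_{|\Gamma}$ and $\xi_{|\Gamma}$ depend on $t$ only, so $\nabla_\Gamma\alpha\equiv 0$: the conjugated boundary operator has \emph{no spatial drift term}, the cross product $\langle\delta\Delta_\Gamma\psi_\Gamma,\,2\delta s\,\nabla_\Gamma\alpha\cdot\nabla_\Gamma\psi_\Gamma\rangle$ vanishes identically, and the only gradient-type contribution one can extract from the boundary cross terms is $\mp\delta s\int_{\Gamma_T}(\partial_t\alpha)|\nabla_\Gamma\psi_\Gamma|^2$, which has no definite sign and the wrong weights. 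Meanwhile the bulk integration by parts of $4d^2s\langle\Delta\psi,\nabla\alpha\cdot\nabla\psi\rangle$ leaves on $\Gamma_T$ the contribution $2d^2s\int_{\Gamma_T}(\partial_\nu\alpha)\bigl(|\partial_\nu\psi|^2-|\nabla_\Gamma\psi_\Gamma|^2\bigr)$: the normal part is favorable (your $s\lambda\int_{\Gamma_T}\xi|\partial_\nu\varphi|^2$), but the tangential part carries the \emph{wrong} sign and gives precisely the term
\[
s\lambda\int_{\Gamma_T} e^{-2s\alpha}\xi\,|\partial_\nu\eta|\,|\nabla_\Gamma\varphi_\Gamma|^2
\]
highlighted in the paper's remark. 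It matches the $s\lambda\xi$ weight of the term you want on the left, so no choice of large $s,\lambda$ absorbs it.

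The resolution in \cite{MMS17}, and the reason the hypothesis $\delta>0$ is essential, is not a boundary Carleman but a \emph{substitution}: integrate by parts on $\Gamma$ to rewrite $|\nabla_\Gamma\psi_\Gamma|^2$ as $-\psi_\Gamma\,\Delta_\Gamma\psi_\Gamma$ (plus lower order, using that the weight is tangentially constant), then eliminate $\Delta_\Gamma\psi_\Gamma$ through the dynamic boundary equation $\delta\Delta_\Gamma\varphi_\Gamma=\mp\partial_t\varphi_\Gamma\pm d\partial_\nu\varphi\pm g$. This converts the bad tangential gradient integral into products of $\psi_\Gamma$ with $\partial_t\psi_\Gamma$, $\partial_\nu\psi$ and $\theta g$, which are then controlled by Young's inequality against the already available $s^3\lambda^3\xi^3|\psi_\Gamma|^2$, $s\lambda\xi|\partial_\nu\psi|^2$, $s^{-1}\xi^{-1}|\partial_t\psi_\Gamma|^2$ and $|\theta g|^2$ terms, with one power of $\lambda$ (or $s$) to spare. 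The positive tangential gradient term in the statement of \eqref{car1} is then recovered \emph{a posteriori} by the same substitution. Your outline is sound everywhere else; replacing the claimed boundary Carleman mechanism by this substitution step is what closes the argument.
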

The proof follows the classical strategy of Carleman estimates for parabolic equations, but one needs to handle several new boundary terms carefully; see \cite{MMS17} for the details. Next, we focus on the main difficulties.
\begin{remark}
Absorbing the boundary term
\begin{equation}\label{termc}
    s\lambda\int_{\Gamma_T} e^{-2s\alpha}\xi |\partial_\nu \eta| \, |\nabla_\Gamma \varphi_\Gamma|^2\,\d S\,\d t
\end{equation}
into the left-hand side of the Carleman estimate is quite challenging, since it appears on both sides of the estimate with the same exponents of the parameters $s$ and $\lambda$. The assumption $\delta > 0$ is needed to absorb \eqref{termc} using the equation $\Delta_\Gamma \varphi_\Gamma=\frac{1}{\delta}(-\partial_t \varphi_\Gamma +d\partial_\nu \varphi)$. Note that the term \eqref{termc} does not appear in the 1D case.
\end{remark}

\begin{remark}
The Carleman estimate of Lemma \ref{lem:carleman} can be extended for general time-space dependent potentials, diffusion matrices, and drift terms of gradient type; see \cite{ACMO21} and \cite{KMO22}.
\end{remark}

\subsection{Boundary control}
Now we discuss the boundary null controllability of the controlled system
\begin{empheq}[left = \empheqlbrace]{alignat=2}
\begin{aligned}\label{eq3}
&\partial_{t}y -d\Delta y = 0  & \qquad\quad &\text{in } \Omega_T,\\
&\partial_{t}y_\Gamma  -\delta\Delta_\Gamma y_\Gamma +d\partial_\nu y  =  \mathds{1}_\gamma(x) v(t,x)  & &\text{on } \Gamma_T, \\
& y_\Gamma=y_{|\Gamma} & &\text{on } \Gamma_T, \\
&(y,y_\Gamma)|_{t=0} = (y_0, y_{0,\Gamma})  & &\text{in } \Omega\times \Gamma,
\end{aligned}
\end{empheq}
where the control is supported in a sub-boundary $\gamma \subset\Gamma$ which is an arbitrary relatively open subset.

Similarly to the distributed controllability, we have the following boundary null controllability concerning the system \eqref{eq3}.
\begin{theorem} \label{thmobs} 
Assume that $\delta>0$. For all $T > 0$, all nonempty relatively open set $\gamma \Subset \Gamma$ and  all initial data
$(y_0, y_{0,\Gamma})\in \mathbb{L}^2$ there exists a control $v\in L^2(\gamma_T)$ 
such that the unique mild solution $(y,y_\Gamma)$ of \eqref{eq3} satisfies
$$y(T,\cdot) =0\; \text{ in } \Omega \quad \text{ and } \quad y_\Gamma(T,\cdot) =0\; \text{ on } \Gamma.$$
\end{theorem}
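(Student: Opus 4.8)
The plan is to argue by duality, exactly as for Theorem~\ref{thm:intro}. Since the control operator $v\mapsto(0,\mathds{1}_\gamma v)$ is bounded from $L^2(\gamma)$ into $\mathbb{L}^2$, the Hilbert Uniqueness Method applies with no admissibility subtlety, and Theorem~\ref{thmobs} becomes equivalent to the \emph{boundary observability inequality}
\[
\|(\varphi(0,\cdot),\varphi_\Gamma(0,\cdot))\|_{\mathbb{L}^2}^2 \le C\int_{\gamma_T}|\varphi_\Gamma|^2\,\d S\,\d t
\]
for every mild solution $(\varphi,\varphi_\Gamma)$ of the backward adjoint system \eqref{eq2}. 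Compared with Proposition~\ref{lem:observa}, the new difficulty is that the observation is the single scalar $\varphi_\Gamma$ restricted to a piece $\gamma$ of the boundary: nothing is measured on $\varphi$ in the bulk, so the trace relation $\varphi_\Gamma=\varphi_{|\Gamma}$ and the boundary identity $d\,\partial_\nu\varphi=\partial_t\varphi_\Gamma+\delta\Delta_\Gamma\varphi_\Gamma$ on $\Gamma_T$ must be exploited in order to recover that information.

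The core step is to prove a \emph{boundary Carleman estimate} having the same left-hand side as \eqref{car1} but with the interior observation $s^3\lambda^4\int_{\omega_T}e^{-2s\alpha}\xi^3|\varphi|^2$ replaced by a term of the form $C\,s^{k}\lambda^{k}\int_{\gamma_T}e^{-2s\alpha}\xi^{k}|\varphi_\Gamma|^2\,\d S\,\d t$ (the source terms vanish for \eqref{eq2}). For this I would first replace the weight generator $\eta$ of \eqref{w1} by a function in $C^2(\overline\Omega)$ adapted to $\gamma$: positive in $\Omega$, with $|\nabla\eta|>0$ on $\overline{\Omega\setminus\gamma'}$ for a small relatively open neighbourhood $\gamma'$ of $\gamma$, keeping $\partial_\nu\eta\le -c<0$ on $\Gamma\setminus\gamma'$ but allowing $\partial_\nu\eta$ to take the opposite sign near $\gamma$, so that the boundary integrals generated by the integrations by parts have a favourable sign on $\Gamma\setminus\gamma$ and can be transferred to the right-hand side over $\gamma$. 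One then runs the usual Carleman scheme (conjugation by $e^{-s\alpha}$, splitting into self-adjoint and skew-adjoint parts, large $s,\lambda$), tracking the extra boundary contributions supported on $\gamma$: those carrying $\partial_\nu\varphi$ are rewritten through $d\,\partial_\nu\varphi=\partial_t\varphi_\Gamma+\delta\Delta_\Gamma\varphi_\Gamma$ and then absorbed, at the price of Caccioppoli-type local energy estimates on the manifold $\Gamma$, into $\int_{\gamma_T}|\varphi_\Gamma|^2$; the delicate surface term \eqref{termc} is treated exactly as in the remark following Lemma~\ref{lem:carleman}, which is again where $\delta>0$ enters.

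I expect this boundary Carleman estimate to be the main obstacle: one must design the weight so that quantitative unique continuation from $\gamma_T$ into $\Omega_T$ is built in — otherwise the full weighted bulk terms on the left of \eqref{car1} cannot be controlled by a boundary observation on $\gamma$ — and one must absorb the boundary terms on $\gamma$ that, unlike in Lemma~\ref{lem:carleman}, no longer come with a good sign. This is precisely where both the coupling relation $d\,\partial_\nu\varphi=\partial_t\varphi_\Gamma+\delta\Delta_\Gamma\varphi_\Gamma$ and sharp regularity estimates for the homogeneous adjoint state on the boundary have to be used.

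Granting the estimate, the conclusion is routine. A density argument, based on the smoothing of the analytic semigroup $(\mathrm{e}^{t\mathcal{A}})_{t\ge0}$ and on the maximal regularity in $\mathbb{L}^2$, lets one apply it to general mild solutions of \eqref{eq2} rather than only to $(\varphi,\varphi_\Gamma)\in\mathbb{E}_1$. Restricting all integrals to the middle interval $(T/4,3T/4)$, where $\mathrm{e}^{-2s\alpha}\xi^{k}$ is bounded above and below by positive constants, yields $\int_{T/4}^{3T/4}\|(\varphi(t,\cdot),\varphi_\Gamma(t,\cdot))\|_{\mathbb{L}^2}^2\,\d t\le C\int_{\gamma_T}|\varphi_\Gamma|^2\,\d S\,\d t$, using that $\mathrm{e}^{-2s\alpha}\xi^{k}$ is bounded on $\gamma_T$ as well. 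Finally, since $\mathcal{A}$ is self-adjoint and non-positive, $t\mapsto\|(\varphi(t,\cdot),\varphi_\Gamma(t,\cdot))\|_{\mathbb{L}^2}$ is non-decreasing along solutions of the backward system \eqref{eq2}, whence $\|(\varphi(0,\cdot),\varphi_\Gamma(0,\cdot))\|_{\mathbb{L}^2}^2\le\frac{2}{T}\int_{T/4}^{3T/4}\|(\varphi(t,\cdot),\varphi_\Gamma(t,\cdot))\|_{\mathbb{L}^2}^2\,\d t$; combining the last two inequalities gives the boundary observability inequality, and hence Theorem~\ref{thmobs}.
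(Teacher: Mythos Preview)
Your proposal is correct and follows essentially the same route as the paper: reduce to the boundary observability inequality (Proposition~\ref{lem:observa1}), prove it via a boundary Carleman estimate (Lemma~\ref{car2}) with a modified weight adapted to $\gamma$, and handle the problematic $|\partial_\nu\varphi|^2$ term on $\gamma$ by substituting the boundary equation $\eqref{eq2}_2$ together with higher regularity estimates for the homogeneous adjoint (cf.\ Remark~\ref{rmkbc} and \cite{CGKM23}). One small correction on the weight: the paper's $\eta_0$ satisfies $|\nabla\eta_0|\ge c_0>0$ on \emph{all} of $\overline{\Omega}$ (no excluded neighbourhood in the bulk) and $\eta_0=0$ only on $\Gamma\setminus\gamma$, which is the precise form needed for the boundary estimate.
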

This boundary null controllability result is equivalent to the following boundary observability inequality.
\begin{proposition}\label{lem:observa1} 
Assume that $\delta>0$. There exists a constant $C >0$ such that for all $(\varphi_T, \varphi_{T,\Gamma})\in \mathbb{L}^2$ the mild solution $(\varphi,\varphi_\Gamma)$ of the backward system \eqref{eq2} satisfies the following observability inequality
\begin{equation}
 \|(\varphi(0,\cdot),\varphi_\Gamma(0,\cdot))\|_{\mathbb L^2}^2  \leq C \int_{\gamma_T} |\varphi_\Gamma|^2 \,\d S\,\d t.\label{eq:observa2}
\end{equation}
 \end{proposition}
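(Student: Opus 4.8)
The plan is to obtain \eqref{eq:observa2} from a \emph{boundary} Carleman estimate for the adjoint system \eqref{eq2} — the analogue of Lemma \ref{lem:carleman} in which the interior observation $s^{3}\lambda^{4}\int_{\omega_T}\mathrm{e}^{-2s\alpha}\xi^{3}|\varphi|^{2}$ is replaced by an observation of the surface component of the form $s^{a}\lambda^{b}\int_{\gamma_T}\mathrm{e}^{-2s\alpha}\xi^{c}|\varphi_\Gamma|^{2}$ — combined with the dissipativity of the semigroup $(\mathrm{e}^{t\mathcal A})_{t\ge0}$. Since \eqref{eq2} is a backward problem, the change of time $t\mapsto T-t$ turns it into the forward problem governed by $\mathcal A$, so that $(\varphi(T-\cdot,\cdot),\varphi_\Gamma(T-\cdot,\cdot))=\mathrm{e}^{\cdot\,\mathcal A}(\varphi_T,\varphi_{T,\Gamma})$; by analyticity of the semigroup the solution is smooth on $[\varepsilon,T-\varepsilon]$ and, in particular, lies in $\mathbb E_1(\varepsilon,T-\varepsilon)$ for every $\varepsilon\in(0,T/2)$, which suffices to legitimize the use of a Carleman estimate whose weights degenerate at $t=0$ and $t=T$ (a standard density/limiting argument then removes any restriction to regular data).

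First I would establish the boundary Carleman estimate. The new ingredient is the weight: instead of $\eta$ vanishing on all of $\Gamma$, one constructs $\eta\in C^{2}(\overline{\Omega})$ that is strictly positive on $\overline{\Omega}$, whose gradient does not vanish outside a small neighbourhood of $\gamma$ in $\overline{\Omega}$, and such that $\partial_\nu\eta\le -c<0$ on $\Gamma\setminus\gamma$ while no sign is imposed near $\gamma$; the weights $\alpha,\xi$ are then defined from $\eta$ exactly as in \eqref{w1}. Conjugating the bulk and surface operators by $\mathrm{e}^{-s\alpha}$ and integrating by parts as in the proof of Lemma \ref{lem:carleman}, the boundary integrals split into a contribution over $\Gamma_T\setminus\gamma_T$, which has a favourable sign thanks to $\partial_\nu\eta<0$ there and can therefore be kept on the left-hand side, and a contribution over $\gamma_T$, which — after using the surface equation $\delta\Delta_\Gamma\varphi_\Gamma=\partial_t\varphi_\Gamma-d\partial_\nu\varphi$ to re-express the normal and tangential derivatives of $\varphi$ on $\gamma$ through $\varphi_\Gamma$, and after absorbing lower-order pieces with $s,\lambda$ large — becomes the observation term. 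As in Lemma \ref{lem:carleman}, the hypothesis $\delta>0$ is again crucial to absorb the critical tangential term analogous to \eqref{termc}; and since $\eta$ may still possess a critical point in an interior neighbourhood of $\gamma$, the residual local bulk term $\int|\varphi|^{2}$ over that neighbourhood is controlled by additional interior regularity/energy estimates for the homogeneous adjoint system \eqref{eq2} (as in \cite{CGKM23}).

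Granting the boundary Carleman estimate, the observability inequality follows by a routine argument. The solution of \eqref{eq2} is homogeneous, so the source terms on the right-hand side of the estimate vanish, and keeping only the zeroth-order terms on the left one is left with
\[
s^{3}\lambda^{4}\int_{\Omega_T}\mathrm{e}^{-2s\alpha}\xi^{3}|\varphi|^{2}\,\d x\,\d t+s^{3}\lambda^{3}\int_{\Gamma_T}\mathrm{e}^{-2s\alpha}\xi^{3}|\varphi_\Gamma|^{2}\,\d S\,\d t\le C\,s^{a}\lambda^{b}\int_{\gamma_T}\mathrm{e}^{-2s\alpha}\xi^{c}|\varphi_\Gamma|^{2}\,\d S\,\d t .
\]
Now fix $s$ and $\lambda$. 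On $[T/4,3T/4]$ the weights $\mathrm{e}^{-2s\alpha}$ and $\xi^{-1}$ are bounded below by positive constants, while $\mathrm{e}^{-2s\alpha}\xi^{c}$ is bounded above on all of $\overline{\Gamma}_T$; the last inequality therefore yields
\[
\int_{T/4}^{3T/4}\big\|(\varphi(t,\cdot),\varphi_\Gamma(t,\cdot))\big\|_{\mathbb L^{2}}^{2}\,\d t\le C\int_{\gamma_T}|\varphi_\Gamma|^{2}\,\d S\,\d t .
\]
Finally, writing $Y(t):=(\varphi(T-t,\cdot),\varphi_\Gamma(T-t,\cdot))=\mathrm{e}^{t\mathcal A}(\varphi_T,\varphi_{T,\Gamma})$ and using that $\mathcal A$ is self-adjoint and non-positive — so that $t\mapsto\|Y(t)\|_{\mathbb L^{2}}$ is non-increasing — we get $\|(\varphi(0,\cdot),\varphi_\Gamma(0,\cdot))\|_{\mathbb L^{2}}^{2}=\|Y(T)\|_{\mathbb L^{2}}^{2}\le\frac{2}{T}\int_{T/4}^{3T/4}\|Y(t)\|_{\mathbb L^{2}}^{2}\,\d t$, which combined with the previous line proves \eqref{eq:observa2}.

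The main obstacle is the boundary Carleman estimate itself: constructing a weight $\eta$ that has no critical points away from a small neighbourhood of $\gamma$ while remaining compatible with the sign condition on $\Gamma\setminus\gamma$, and then simultaneously absorbing the bulk and surface boundary terms this weight generates — in particular the critical tangential term, whose absorption once more forces $\delta>0$, and the residual interior term near $\gamma$, which is precisely where the auxiliary regularity estimates enter. Everything downstream of the Carleman estimate (the time localization and the semigroup dissipation argument above) is entirely standard.
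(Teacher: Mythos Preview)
Your approach is essentially the paper's: deduce \eqref{eq:observa2} from a boundary Carleman estimate for the homogeneous adjoint system (the paper's Lemma~\ref{car2}), then freeze $s,\lambda$, bound the weights on a compact time subinterval, and use the dissipativity of $\mathrm{e}^{t\mathcal A}$. The downstream argument you write is correct and standard.

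One correction concerning the weight. The paper (following \cite{Im95}) takes $\eta_0>0$ in $\Omega$, $\eta_0=0$ on $\Gamma\setminus\gamma$, and $|\nabla\eta_0|\ge c_0>0$ on \emph{all} of $\overline\Omega$; thus $\eta_0$ is neither strictly positive on $\overline\Omega$ nor does it possess a critical point near $\gamma$, and there is no residual interior bulk term to absorb. The genuine obstacle is instead the boundary integral
\[
s\lambda\int_{\gamma_T}\mathrm{e}^{-2s\alpha}\xi\,(\partial_\nu\eta_0)\,|\partial_\nu\varphi|^{2}\,\d S\,\d t,
\]
on which $\partial_\nu\eta_0$ carries no sign (see Remark~\ref{rmkbc}). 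It is handled as you indicate --- replace $\partial_\nu\varphi$ via the surface equation and invoke higher regularity estimates for the homogeneous system --- but this is precisely why Lemma~\ref{car2} is stated only for solutions of \eqref{eq2} rather than, as in Lemma~\ref{lem:carleman}, for arbitrary $(\varphi,\varphi_\Gamma)\in\mathbb E_1$ with $L^{2}$ sources.
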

As before, the key ingredient to show the above observability inequality is a boundary Carleman estimate. Here, we need a weight function with modified properties, namely, there exists a function 
$\eta_0\in C^2(\overline{\Omega})$ such that (see \cite{Im95})
\begin{align*}
&{\rm i)} \quad\eta_0 > 0 \text{ in }\Omega, \quad \eta_0=0 \text{ on }\Gamma\setminus \gamma,\\
&{\rm ii)} \quad |\nabla \eta_0| \geq c_0> 0 \text{ in }\overline{\Omega},\\
&{\rm iii)} \quad \partial_\nu \eta_0 \leq -c_0 \text{ on }\Gamma \setminus \gamma
\end{align*}
for some constant $c_0 > 0$. Note that these properties differ from those of the internal weight function $\eta$. Moreover, we consider the weights $\alpha$ and $\xi$ as in \eqref{w1} replacing $\eta$ by $\eta_0$.
\begin{lemma}[Boundary Carleman estimate]\label{car2} 
Assume that $\delta>0$. Let $T>0$, $\gamma \Subset  \Gamma$ be nonempty and relatively open. Then there exist constants $C>0$ and $\lambda_1,s_1 \ge 1$ such that any smooth solution of the adjoint system \eqref{eq2} satisfies
{\small\begin{align*}
 s^{-1} \int_{\Omega_T} &e^{-2s\alpha} \xi^{-1} (|\partial_t\varphi|^2 + |\Delta \varphi|^2)\,\d x\,\d t 
  + s^{-1}\int_{\Gamma_T}e^{-2s\alpha}\xi^{-1}(|\partial_t \varphi_\Gamma|^2 
   + |\Delta_\Gamma \varphi_\Gamma|^2)\,\d S\,\d t \\
  & \qquad + s\lambda^2 \int_{\Omega_T} e^{-2s\alpha} \xi |\nabla \varphi|^2 \,\d x\,\d t + s\lambda \int_{\Gamma_T} e^{-2s\alpha} \xi |\nabla_\Gamma \varphi_\Gamma|^2 \,\d S\,\d t \\
 &  \qquad + s^3\lambda^4\int_{\Omega_T} e^{-2s\alpha} \xi^3 |\varphi|^2 \, \d x\,\d t +  s^3\lambda^3 \int_{\Gamma_T} e^{-2s\alpha} \xi^3 |\varphi_\Gamma|^2 \, \d S\,\d t \\
  & \qquad + s\lambda \int_{\Gamma_T} e^{-2s\alpha} \xi|\partial_\nu \varphi|^2 \,\d S\,\d t  \\
&\leq C s^6\lambda^6 \int_{\gamma_T} e^{-2s\alpha} \xi^6 |\varphi_\Gamma|^2 \, \d x\,\d t
\end{align*}}
for all $\lambda \geq \lambda_1$ and $s \geq s_1$.
\end{lemma}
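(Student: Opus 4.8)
The plan is to adapt the Fursikov--Imanuvilov scheme underlying Lemma~\ref{lem:carleman}, but with the weight generated by $\eta_0$ in place of $\eta$, replacing the interior observation by a boundary one through a careful treatment of the surface integrals. First, set $\psi:=e^{-s\alpha}\varphi$ and $\psi_\Gamma:=e^{-s\alpha}\varphi_\Gamma$ and conjugate: the operators $L_s\psi:=e^{-s\alpha}(-\partial_t-d\Delta)(e^{s\alpha}\psi)$ and its dynamic-boundary counterpart vanish for solutions of \eqref{eq2}. Splitting these into their formally self-adjoint and skew-adjoint parts and expanding the vanishing $L^2(\Omega_T)$- and $L^2(\Gamma_T)$-norms, the cross terms produce, after integration by parts in $\Omega_T$ and on the closed manifold $\Gamma$, all the terms on the left-hand side of the inequality plus boundary integrals over $\Gamma_T$. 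The decisive point is that property~(ii) of $\eta_0$ ($|\nabla\eta_0|\geq c_0>0$ on \emph{all} of $\overline\Omega$) makes the dominant interior terms $s\lambda^2\int_{\Omega_T}e^{-2s\alpha}\xi|\nabla\varphi|^2$ and $s^3\lambda^4\int_{\Omega_T}e^{-2s\alpha}\xi^3|\varphi|^2$ appear with positive coefficients \emph{without} an auxiliary interior observation region, which is exactly what transfers the observation to the boundary. The coupling term $d\partial_\nu\varphi$ feeding the boundary equation is absorbed, as in \cite{MMS17}, using the left-hand boundary term $s\lambda\int_{\Gamma_T}e^{-2s\alpha}\xi|\partial_\nu\varphi|^2$; and, as there, the hypothesis $\delta>0$ is needed to absorb the critical surface term $s\lambda\int_{\Gamma_T}e^{-2s\alpha}\xi|\partial_\nu\eta_0|\,|\nabla_\Gamma\varphi_\Gamma|^2$ by means of $\delta\Delta_\Gamma\varphi_\Gamma=-\partial_t\varphi_\Gamma+d\partial_\nu\varphi$.

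Next, I would sort the remaining boundary integrals over $\Gamma_T$. On $\Gamma\setminus\gamma$ one has $\eta_0=0$ (hence $\nabla_\Gamma\eta_0=0$) and $\partial_\nu\eta_0\leq-c_0<0$, so the local situation is exactly that of Lemma~\ref{lem:carleman} and these terms carry the favourable sign, or are absorbed as there. On $\gamma$, neither $\eta_0$ nor $\partial_\nu\eta_0$ has a sign, so the corresponding integrals must be moved to the right-hand side, where they are dominated by $C\int_{\gamma_T}e^{-2s\alpha}(s\lambda\xi\,|\partial_\nu\varphi|^2+s\lambda\xi\,|\nabla_\Gamma\varphi_\Gamma|^2+s^3\lambda^3\xi^3\,|\varphi_\Gamma|^2)$. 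The first two carry the \emph{same} powers of $s$ and $\lambda$ as the matching left-hand terms and therefore cannot be absorbed by enlarging the parameters; a genuine reduction of the observation is needed.

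Finally, I would perform this reduction through local parabolic regularity estimates for the coupled adjoint system near $\gamma$, in the spirit of the regularity estimates underlying \cite{CGKM23}. After shrinking---harmlessly, since $\gamma$ is arbitrary---the zero set of the weight so that it lies strictly inside $\gamma$, the offending boundary terms are supported in a compact subset of $\gamma$ and one may use cutoffs supported in $\gamma$. Exploiting that $\varphi$ solves the bulk heat equation with Dirichlet trace $\varphi_\Gamma$ and that $\varphi_\Gamma$ solves $-\partial_t\varphi_\Gamma-\delta\Delta_\Gamma\varphi_\Gamma=-d\partial_\nu\varphi$, weighted Caccioppoli- and interior-regularity-type arguments in a collar of $\gamma$ trade the weighted $|\partial_\nu\varphi|^2$ and the tangential derivatives of $\varphi_\Gamma$ for a weighted $|\varphi_\Gamma|^2$ over $\gamma$, up to interior integrals of $\varphi$ in the collar and normal-derivative terms that reappear; these are absorbed back into the left-hand side---the normal-derivative contributions into $s\lambda\int_{\Gamma_T}e^{-2s\alpha}\xi|\partial_\nu\varphi|^2$---once $s$ is large, the weights serving to break the circularity between the bulk and the surface parts. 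Since each integration by parts transfers derivatives onto $e^{-2s\alpha}$, additional powers of $s$, $\lambda$, $\xi$ accumulate through the successive reductions, and careful bookkeeping produces precisely the weight $s^6\lambda^6\xi^6$; combining with the first paragraph and absorbing the resulting $\varepsilon$-fraction of the left-hand side gives the lemma.

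The step I expect to be the main obstacle is this reduction. What the Carleman computation naturally delivers on $\gamma$ are $\partial_\nu\varphi$ and the full surface gradient $\nabla_\Gamma\varphi_\Gamma$, both of strictly higher differential order than the target observation $\varphi_\Gamma$ and hence not removable through the parameters; moreover $d\partial_\nu\varphi$ ties together the bulk and boundary equations, so one must trade these quantities down by hand while (a) controlling the increasingly large powers of $s$, $\lambda$, $\xi$ generated at each step and (b) ensuring that the normal- and tangential-derivative terms re-emerging from the coupling stay absorbable into the left-hand side. Making these local regularity estimates precise near $\gamma$ is the technical heart of the argument.
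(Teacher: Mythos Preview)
Your proposal is correct and follows essentially the same approach as the paper, which does not give a full proof but refers to \cite{CGKM23} and singles out in Remark~\ref{rmkbc} the crux of the argument: the boundary term $s\lambda\int_{\gamma_T}e^{-2s\alpha}\xi(\partial_\nu\eta_0)|\partial_\nu\varphi|^2$ over $\gamma$ is handled by substituting $d\partial_\nu\varphi=\partial_t\varphi_\Gamma+\delta\Delta_\Gamma\varphi_\Gamma$ from equation~$\eqref{eq2}_2$ and then invoking higher regularity estimates for the homogeneous adjoint system to reduce everything to $|\varphi_\Gamma|^2$ on $\gamma$. Your identification of this reduction as the technical heart, and your plan to carry it out via localized weighted regularity (Caccioppoli-type) estimates near $\gamma$, match this exactly; note in passing that, as the paper remarks, it is precisely this use of the homogeneous equation that prevents the estimate from extending to general $L^2$ source terms.
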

For the proof, we refer to \cite{CGKM23} for a detailed exposition.
\begin{remark}\label{rmkbc}
The boundary term
$$
s\lambda \int_{\gamma_T} \mathrm{e}^{-2s\alpha} \xi (\partial_\nu \eta_0) |\partial_\nu \varphi|^2 \, \d S\, \d t
$$
is more challenging in the boundary control case because $\partial_\nu \eta_0 <0$ only on $\Gamma \setminus \gamma$, while in the internal Carleman estimate \eqref{car1}, $\partial_\nu \eta <0$ on the whole $\Gamma$. To overcome this issue, one trick is to replace the term $\partial_\nu \varphi$ by the equation $\eqref{eq2}_2$ and use higher regularity estimates. However, this technique prevents obtaining a boundary Carleman estimate with general $L^2$ source terms as in the distributed control case.
\end{remark}
Once the lateral data are available, that is, Dirichlet and Neumann data, one can prove the following Carleman estimate with source terms.
\begin{lemma}\label{car3} 
Assume that $\delta>0$. There exist constants $C>0$ and $\lambda_1,s_1 \ge 1$ such that
{\small\begin{align*}
 s^{-1} \int_{\Omega_T} &e^{-2s\alpha} \xi^{-1} (|\partial_t z|^2 + |\Delta z|^2)\,\d x\,\d t 
  + s^{-1}\int_{\Gamma_T}e^{-2s\alpha}\xi^{-1}(|\partial_t z_\Gamma|^2 
   + |\Delta_\Gamma z_\Gamma|^2)\,\d S\,\d t \\
  & \qquad + s\lambda^2 \int_{\Omega_T} e^{-2s\alpha} \xi |\nabla z|^2 \,\d x\,\d t + s\lambda \int_{\Gamma_T} e^{-2s\alpha} \xi |\nabla_\Gamma z_\Gamma|^2 \,\d S\,\d t \\
 &  \qquad + s^3\lambda^4\int_{\Omega_T} e^{-2s\alpha} \xi^3 |z|^2 \, \d x\,\d t +  s^3\lambda^3 \int_{\Gamma_T} e^{-2s\alpha} \xi^3 |z_\Gamma|^2 \, \d S\,\d t \\
  & \qquad + s\lambda \int_{\Gamma_T} e^{-2s\alpha} \xi|\partial_\nu z|^2 \,\d S\,\d t  \\
&\hspace{-0.3cm}\leq C s^3\lambda^4 \int_{\gamma_T} e^{-2s\alpha} \xi^3 |z_\Gamma|^2 \, \d S\,\d t + s\lambda \int_{\gamma_T} e^{-2s\alpha} \xi|\partial_\nu z|^2 \,\d S\,\d t \notag\\
&\hspace{-0.3cm} + C \int_{\Omega_T} e^{-2s\alpha} |\partial_t z - d\Delta z|^2 \,\d x \,\d t + C \int_{\Gamma_T} e^{-2s\alpha} |\partial_t z_\Gamma - \delta \Delta_\Gamma z_\Gamma 
    + d \partial_\nu z|^2 \,\d S\,\d t
\end{align*}}
for all $\lambda \geq \lambda_1$, $s \geq s_1$ and all $(z,z_\Gamma)\in \mathbb{E}_1$.
\end{lemma}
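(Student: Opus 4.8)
The plan is to follow the Fursikov--Imanuvilov strategy used for Lemma~\ref{lem:carleman}, but working with the boundary-adapted weight $\eta_0$ instead of $\eta$, and to transfer the boundary terms carrying the ``wrong'' sign on $\gamma$ to the right-hand side, where they become the observation terms in $z_\Gamma$ and $\partial_\nu z$. Concretely, set $f := \partial_t z - d\Delta z$, $g := \partial_t z_\Gamma - \delta\Delta_\Gamma z_\Gamma + d\partial_\nu z$, $w := e^{-s\alpha}z$, $w_\Gamma := e^{-s\alpha}z_\Gamma = w_{|\Gamma}$, and split the conjugated operators $e^{-s\alpha}(\partial_t - d\Delta)(e^{s\alpha}\,\cdot\,)$ and $e^{-s\alpha}(\partial_t - \delta\Delta_\Gamma)(e^{s\alpha}\,\cdot\,)$ into their symmetric and skew-symmetric parts. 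Taking $\mathbb{L}^2_T$-norms and expanding the cross terms by integration by parts in $\Omega_T$ and on $\Gamma_T$ produces, on the left, the dominant interior terms $s^3\lambda^4\xi^3|w|^2 + s\lambda^2\xi|\nabla w|^2 + s^{-1}\xi^{-1}(|\partial_t w|^2 + |\Delta w|^2)$, the analogous surface terms in $w_\Gamma$, and a family of boundary integrals on $\Gamma_T$ weighted by $\partial_\nu\eta_0$; on the right one is left with $\|e^{-s\alpha}f\|^2 + \|e^{-s\alpha}g\|^2$ plus commutator contributions that are lower order in $s$.

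The first key step is to absorb the critical boundary term
\[
s\lambda\int_{\Gamma_T}e^{-2s\alpha}\xi\,|\partial_\nu\eta_0|\,|\nabla_\Gamma w_\Gamma|^2\,\d S\,\d t,
\]
which, exactly as in the Remark following Lemma~\ref{lem:carleman}, occurs with the same powers of $s$ and $\lambda$ on both sides. One uses $\delta>0$ together with the boundary equation written as $\delta\Delta_\Gamma z_\Gamma = \partial_t z_\Gamma + d\partial_\nu z - g$ to estimate $|\nabla_\Gamma w_\Gamma|^2$ after an integration by parts, trading it against the surface terms $s^3\lambda^3\xi^3|w_\Gamma|^2$, $s^{-1}\xi^{-1}|\partial_t w_\Gamma|^2$, the Neumann term, and $\|e^{-s\alpha}g\|^2$.

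The second and main step is to treat the boundary integrals in which $\partial_\nu\eta_0$ now appears with the wrong sign on $\gamma$. Since $\partial_\nu\eta_0 \le -c_0$ only on $\Gamma\setminus\gamma$ (where moreover $\eta_0 = 0$, hence $\nabla_\Gamma\eta_0 = 0$), the terms $s\lambda\int_{\Gamma_T}e^{-2s\alpha}\xi(\partial_\nu\eta_0)|\partial_\nu w|^2$ and $s^3\lambda^3\int_{\Gamma_T}e^{-2s\alpha}\xi^3(\partial_\nu\eta_0)|w_\Gamma|^2$ split into a good (negative) part over $(\Gamma\setminus\gamma)_T$, kept on the left, and a part over $\gamma_T$ that is bounded in absolute value by $Cs\lambda\int_{\gamma_T}e^{-2s\alpha}\xi|\partial_\nu z|^2$ and $Cs^3\lambda^4\int_{\gamma_T}e^{-2s\alpha}\xi^3|z_\Gamma|^2$, respectively, and moved to the right; adding the same $\gamma_T$-integrals back to both sides restores the full $\int_{\Gamma_T}$ on the left. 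One must also check that the combined interior and surface estimates still control $s^3\lambda^3\int_{\Gamma_T}e^{-2s\alpha}\xi^3|z_\Gamma|^2$ on $\Gamma\setminus\gamma$: the non-degeneracy $|\nabla\eta_0|\ge c_0$ on all of $\overline{\Omega}$ makes the interior estimate valid with no interior localization, and its boundary trace together with the surface estimate (whose tangential-gradient coercivity degenerates on $\Gamma\setminus\gamma$) feed each other through the dynamic boundary condition, the coupling term $d\partial_\nu z$ in the surface equation being absorbed into $s\lambda\int_{\Gamma_T}e^{-2s\alpha}\xi|\partial_\nu z|^2$ for $s$ large.

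Finally, one fixes $\lambda = \lambda_1$ large enough to dominate the $\lambda$-subcritical contributions, then $s \ge s_1$ large enough to absorb the commutator terms from the conjugation — all of which carry lower powers of $s$ than the dominant left-hand side — and then returns from $w$ to $z$ by multiplying through by $e^{-2s\alpha}$ and reorganizing. The main obstacle is precisely the bookkeeping in the third step: unlike the internal estimate of Lemma~\ref{lem:carleman}, the wrong sign of $\partial_\nu\eta_0$ on $\gamma$ forces \emph{both} a trace observation $z_\Gamma$ and a Neumann observation $\partial_\nu z$ on the right-hand side, and one must verify that no remaining boundary term obstructs these absorptions; this is why the estimate holds with general $L^2$ source terms only once the lateral Cauchy data are available, in contrast with Lemma~\ref{car2}. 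For a complete proof we refer to \cite{CGKM23}.
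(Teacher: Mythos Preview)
The paper does not give a proof of Lemma~\ref{car3}; it merely states the estimate after the sentence ``Once the lateral data are available, that is, Dirichlet and Neumann data, one can prove the following Carleman estimate with source terms,'' and the underlying computations are deferred to \cite{CGKM23}. Your sketch is consistent with the approach the paper signals through Remark~\ref{rmkbc} and the remark following Lemma~\ref{lem:carleman}: conjugate by $e^{-s\alpha}$ with the boundary-adapted weight $\eta_0$, absorb the critical tangential-gradient term via $\delta>0$, and split the $\partial_\nu\eta_0$-weighted boundary integrals into a favorable part on $\Gamma\setminus\gamma$ and a $\gamma$-localized part that is sent to the right-hand side as the two lateral observations $z_\Gamma$ and $\partial_\nu z$. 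One small slip: at the end you write ``returns from $w$ to $z$ by multiplying through by $e^{-2s\alpha}$''; in fact $z=e^{s\alpha}w$, and the $e^{-2s\alpha}$ factors appear automatically when you rewrite the $w$-estimate in terms of $z$, not by an extra multiplication.
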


\section{Inverse problems}\label{sec3}
Henceforth $\delta>0$. Consider the following system with dynamic boundary conditions
\begin{empheq}[left = \empheqlbrace]{alignat=2}
\begin{aligned}\label{eq5}
&\partial_{t}y -d\Delta y = f(t,x)   & \qquad\quad &\text{in } \Omega_T,\\
&\partial_{t}y_\Gamma  -\delta\Delta_\Gamma y_\Gamma +d\partial_\nu y  =  g(t,x)  & &\text{on } \Gamma_T, \\
& y_\Gamma=y_{|\Gamma} & &\text{on } \Gamma_T, \\
&(y,y_\Gamma)|_{t=0} = (y_0, y_{0,\Gamma})  & &\text{in } \Omega\times \Gamma.
\end{aligned}
\end{empheq}
We shall focus on inverse source problems, whereas other inverse problems could be considered such as inverse potential (zeroth order coefficients) and initial data problems.

Let $t_0 \in (0,T)$, $\displaystyle T_0=\frac{T+t_0}{2}$,  $E_{t_0,T}=\left(t_0,T\right)\times E$, and $Y:=(y,y_\Gamma)$.
\subsection{Interior measurement}
We seek the determination of unknown source terms $(f,g)$ in \eqref{eq5} from the interior measurement
$$y(t,x)|_{\omega_{t_0,T}}\; (\text{first component}) \quad \text{ and } \quad Y(T_0,\cdot).$$
The main questions to be resolved in this context are the following:
\begin{itemize}
    \item[(i)] Uniqueness: does $y|_{\omega_{t_0,T}}=0 \text{ and } Y(T_0,\cdot)=0$ imply $(f,g)=(0,0)$?
    \item[(ii)] Stability: can one obtain continuous dependence, namely, continuity of the correspondence $\left(y|_{\omega_{t_0,T}}, Y(T_0,\cdot)\right) \mapsto (f,g)$ for suitable norms?
    \item[(iii)] Identification: can one design a numerical algorithm to reconstruct the unknowns $(f,g)$ from given measurements?
\end{itemize}
For general source terms $(f,g)\in \mathbb{L}^2_T$, nonuniqueness occurs and can be seen as a consequence of null controllability presented in Section \ref{sec2}. More precisely, we show the following result.
\begin{proposition}
For any $Y_0 :=\left(y_0,y_{0,\Gamma}\right)\in \mathbb{L}^2$, there exists a non-zero source term $f\in L^2\left(\Omega_T\right)$ such that the mild solution $Y$ of \eqref{eq5} with $g=0$ satisfies $y=0$ in $\omega_{t_0,T}$ and $Y\left(T_0,\cdot\right)=0$.
\end{proposition}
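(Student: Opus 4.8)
The plan is to construct the desired source term by exploiting the null controllability result of Theorem~\ref{thm:intro}, applied not on the whole interval $(0,T)$ but on the later interval $(t_0,T)$, combined with a gluing argument at time $t_0$. First I would run the free dynamics: let $Y$ solve \eqref{eq5} with $f=g=0$ on $(0,t_0)$ starting from $Y_0$, so that at time $t_0$ we reach some state $Y(t_0,\cdot)=\left(y(t_0,\cdot),y_\Gamma(t_0,\cdot)\right)\in\mathbb{L}^2$ (by well-posedness of the semigroup generated by $\mathcal{A}$). On $(0,t_0)$ the source is zero, and in particular $y=0$ on $\omega_{t_0,T}$ is not yet an issue since $\omega_{t_0,T}$ only sees times in $(t_0,T)$.

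The heart of the argument is on $(t_0,T)$. Here I would invoke Theorem~\ref{thm:intro} (with $T$ replaced by $T-t_0$, or equivalently with the time origin shifted to $t_0$, and with control region $\omega$) to produce a control $v\in L^2(\omega_{t_0,T})$ driving the state from $Y(t_0,\cdot)$ to $(0,0)$ at time $T$. Setting $f:=\mathds{1}_\omega v$ on $(t_0,T)$ and $f:=0$ on $(0,t_0)$, with $g=0$ throughout, the mild solution $Y$ of \eqref{eq5} then satisfies $Y(T,\cdot)=(0,0)$; a fortiori $Y(T_0,\cdot)=(0,0)$, since $T_0=\frac{T+t_0}{2}\in(t_0,T)$ and the trajectory from time $T_0$ onward solves the homogeneous system in a neighborhood of $T$... wait, that is not automatic. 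Let me instead argue directly: since the controlled trajectory on $(t_0,T)$ reaches $0$ at time $T$, and $T_0<T$, I need $Y(T_0,\cdot)=0$ as well. This I would arrange by a second application of null controllability: first steer $Y(t_0,\cdot)$ to $0$ on the sub-interval $(t_0,T_0)$ using a control supported in $\omega_{t_0,T_0}$, then keep $f=g=0$ on $(T_0,T)$ so that $Y$ stays identically $0$ on $(T_0,T)$, giving both $Y(T_0,\cdot)=0$ and $Y(T,\cdot)=0$. With this choice, $y$ vanishes on $\omega_{t_0,T}$ as well: on $(t_0,T_0)$ the control lives in $\omega$ but the requirement is only $y|_{\omega_{t_0,T}}=0$; so I must be more careful.

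Let me restate the cleanest version. Choose the control on $(T_0,T)$: apply Theorem~\ref{thm:intro} on the interval $(T_0,T)$ with control region $\omega$ to steer $Y(T_0,\cdot)$ (the free evolution of $Y_0$ up to time $T_0$) to $(0,0)$ at time $T$. Define $f:=\mathds{1}_\omega v$ on $(T_0,T)$ and $f:=0$ on $(0,T_0)$, and $g:=0$. Then on $(0,T_0)$ the solution $Y$ evolves freely, so $y=0$ on $\omega_{t_0,T_0}$ would require $Y_0$ special — this is false in general. So this approach fails the observation constraint on $(t_0,T_0)$. The resolution: the observation region is $\omega_{t_0,T}$, which \emph{does} include $(t_0,T_0)\times\omega$, so I genuinely need $y\equiv 0$ there. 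The correct construction is therefore: on $(t_0,T)$, I must produce $f$ supported in $\omega$ such that $y\equiv0$ on $\omega_{t_0,T}$ \emph{and} $Y(T,\cdot)=0$ \emph{and} $Y(T_0,\cdot)=0$. This is exactly a null controllability statement with the extra constraint that the solution vanish on the observation set — but that constraint is incompatible with an $\omega$-supported control unless one works in two stages in space-time. A clean fix is to use an \emph{annular} control: since $\omega\Subset\Omega$, pick $\omega'$ with $\overline{\omega}\subset\omega'\Subset\Omega$ and use Theorem~\ref{thm:intro} with control region $\omega'\setminus\overline{\omega}$... but the proposition as stated allows $f$ supported anywhere in $\Omega_T$, only insisting $g=0$ and $y|_{\omega_{t_0,T}}=0$. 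So: apply null controllability on $(t_0,T)$ with control supported in a set $\omega''\Subset\Omega$ \emph{disjoint} from $\omega$, steering $Y(t_0,\cdot)$ to $0$ at $T$; since the source never touches $\omega$ and the original heat-type equation has no reason to force $y=0$ on $\omega$... this still need not vanish on $\omega$.

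I now see the intended mechanism more plainly, so let me commit to it. The point is that the observation is only the \emph{first component} $y$ on $\omega_{t_0,T}$; by unique continuation for the bulk heat equation, if $y\equiv0$ on $\omega_{t_0,T}$ and $f=0$ near $\omega$, then $y\equiv0$ everywhere. That cannot be what is wanted either. The honest reading: take $f$ to be the null control on the \emph{full} interval $(0,T)$ from Theorem~\ref{thm:intro} with control region $\omega$, so $Y(T,\cdot)=0$; this does \emph{not} give $y|_{\omega_{t_0,T}}=0$. Hence the nonuniqueness is demonstrated differently: the key is that the map $(f,g)\mapsto\left(y|_{\omega_{t_0,T}},Y(T_0,\cdot)\right)$ has a nontrivial kernel, and one exhibits a kernel element by solving a \emph{controllability problem for the adjoint-type data}: one needs $f\neq0$ with $y|_{\omega_{t_0,T}}=0$ and $Y(T_0,\cdot)=0$ and $Y(0,\cdot)=Y_0$. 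Since the observation and the final-time-ish conditions are both imposed on $(t_0,T)$ (note $T_0>t_0$), and $f$ is free on all of $\Omega_T$, I would: (1) on $(0,t_0)$, let $Y$ run freely from $Y_0$, reaching $Y(t_0,\cdot)$; (2) on $(t_0,T_0)$, I need $y\equiv0$ on $\omega\times(t_0,T_0)$; this is a controllability-to-zero-on-a-subset problem — but simpler, I can instead just demand $Y(t_0,\cdot)$ is already such that the free solution vanishes on $\omega$ near $t_0$, which fails; (3) so actually, redefine $t_0$'s role: the cleanest is to let $f$ \emph{also} act on $(0,t_0)$. On $(0,t_0)$, use null controllability (Theorem~\ref{thm:intro}, interval $(0,t_0)$, region $\omega$) to steer $Y_0$ to $0$ at time $t_0$; then on $(t_0,T)$ set $f=0=g$, so $Y\equiv0$ on $(t_0,T)$, whence $y|_{\omega_{t_0,T}}=0$ and $Y(T_0,\cdot)=0$ automatically, and the source $f=\mathds{1}_\omega v$ on $(0,t_0)$ is non-zero for generic $Y_0$ (and for $Y_0=0$ one instead starts the free solution, perturbs by any compactly-in-$(0,t_0)$ supported $f$ that returns to a controllable state — or simply notes the statement for $Y_0=0$ needs $f\neq0$ with $y|_{\omega_{t_0,T}}=0$, $Y(T_0,\cdot)=0$: take any $f$ supported in $\Omega\times(0,t_0)$ producing $Y(t_0,\cdot)\neq0$, then null-control back to $0$ on a slightly larger sub-interval of $(0,t_0)$). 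The main obstacle, and the step to write carefully, is this \textbf{time-decomposition/gluing argument}: one must verify that the concatenated source yields a \emph{mild} solution of \eqref{eq5} on $(0,T)$ with the claimed vanishing properties, using uniqueness of mild solutions and the semigroup property on each subinterval, and one must check the non-triviality of $f$ (which, for $Y_0\neq0$, follows because if $f\equiv0$ then $Y(t_0,\cdot)=\mathrm{e}^{t_0\mathcal{A}}Y_0\neq0$ by injectivity of the analytic semigroup, contradicting that the free solution from a non-zero state would have to vanish on $\omega_{t_0,T}$ by backward uniqueness — or more simply, because a zero source cannot steer $Y_0\neq0$ to $0$).
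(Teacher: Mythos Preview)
Your final paragraph lands on the correct mechanism, which is essentially the paper's: kill the state before time $t_0$ by an internal source, then let it rest at zero on $(t_0,T)$ so that both $y|_{\omega_{t_0,T}}=0$ and $Y(T_0,\cdot)=0$ come for free. The paper's execution differs in one small but useful way: instead of applying null controllability directly on $(0,t_0)$ from $Y_0$ (which forces you into a case split when $Y_0=0$, since then $v\equiv0$ is an admissible null control), the paper first picks $t'\in(0,t_0)$ and an \emph{arbitrary} non-zero $f_1\in L^2(\Omega_{t'})$ on $(0,t')$, then null-controls on $(t',t_0)$ from the resulting state to zero. This guarantees $f\not\equiv 0$ uniformly in $Y_0$ and is exactly the workaround you sketched for the $Y_0=0$ case; the paper simply uses it from the outset. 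Everything preceding your last paragraph is exploration that should be excised---the detours through controls on $(T_0,T)$, annular control regions, and unique continuation are all red herrings, and the only ingredient actually needed is Theorem~\ref{thm:intro} on a subinterval ending at $t_0$, together with the trivial observation that the zero state is a rest point of the homogeneous system.
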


\begin{proof}
Let $t' \in (0,t_0)$ and consider a non-zero source function $f_1\in L^2\left(\Omega_{t'}\right)$. Then the corresponding solution $Y_{f_1} =\left(y_{f_1} ,y_{f_1,\Gamma}\right)$ to \eqref{eq5} in the time interval $(0,t')$ is not identically null. Choosing as initial condition $Y_1=\left(y_{f_1} (t', \cdot), 0\right)$, there exists a control $v\in L^2\left((t', t_0) \times \omega)\right)$ such that $Y_v\left(t_0 , \cdot\right) \equiv 0$ in $\Omega\times \Gamma$, as a consequence of Theorem \ref{thm:intro}. Consider $f=\mathds{1}_{(0, t')} f_1 + \mathds{1}_{(t', t_0)} v \in L^2\left(\Omega_T\right)$ and let $Y$ denote the corresponding solution of \eqref{eq5}. We have $f\not\equiv 0$, while $y=0$ in $\omega_{t_0,T}$ and $Y\left(T_0,\cdot\right)=0$.
\end{proof}
Therefore, stability fails in general, and one needs to restrict the source terms to some admissible set to obtain stability results. Let us introduce the following admissible set
\begin{align}\label{eqas}
\mathcal{S}\left(C_0\right) :=\left\{(f,g)\in H^1\left(0,T; \mathbb{L}^2\right): \begin{array}{ll}
\left|f_t(t,x)\right| \leq C_0 \left|f(T_0,x)\right|, &\text{ a.e. }(t,x)\in \Omega_T \\
\left|g_t(t,x)\right| \leq C_0 \left|g(T_0,x)\right|, &\text{ a.e. }(t,x)\in \Gamma_T
\end{array}\right\}
\end{align}
for a given $C_0>0$. The set $\mathcal{S}\left(C_0\right)$ includes an interesting class of source terms, e.g., some source terms of the form $f(t,x)=f_0(x)r(t,x)$ and $g(t,x)=g_0(x)s(t,x)$. 

The main result on the Lipschitz stability estimate for the inverse source problem reads as follows.
\begin{theorem} \label{thmstab1} 
There exists a positive constant $C=C\left(\Omega, \omega,T,t_0,C_0\right)$ such that, for any admissible source $\mathcal{F}=(f,g) \in \mathcal{S}\left(C_0\right)$, we have
\begin{equation}\label{eq3.71}
\|(f,g)\|_{\mathbb{L}^{2}_T} \leq C\left(\|\partial_t y\|_{L^2\left(\omega_{t_0, T}\right)}+\left\|Y(T_0,\cdot)\right\|_{\mathbb{H}^2}\right),
\end{equation}
where $Y :=\left(y,y_\Gamma\right)$ is the mild solution of \eqref{eq5}.
\end{theorem}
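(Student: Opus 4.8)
The plan is to follow the standard Bukhgeim--Klibanov (BK) scheme for proving Lipschitz stability of inverse source problems via Carleman estimates, adapted to the dynamic boundary setting using the internal Carleman estimate of Lemma~\ref{lem:carleman}. First I would differentiate the system \eqref{eq5} in time. Writing $(u,u_\Gamma):=(\partial_t y,\partial_t y_\Gamma)$, the pair solves a system of the same type as \eqref{eq5} with source terms $(f_t,g_t)$ and initial data $(u(0),u_\Gamma(0))$ determined by \eqref{eq5} at $t=0$; the admissibility condition in $\mathcal{S}(C_0)$ is exactly what lets us control $|f_t|\le C_0|f(T_0,\cdot)|$ and $|g_t|\le C_0|g(T_0,\cdot)|$ pointwise. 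Because the Carleman weights $e^{-2s\alpha}$ blow up at $t=0$ and $t=T$, we first cut off in time away from the endpoints, or equivalently work on $(t_0,T)$ and exploit that $T_0$ is the midpoint so the weight is bounded below there; this is the standard device to transfer information from the interior time slice $t=T_0$.

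Next I would apply Lemma~\ref{lem:carleman} to $(u,u_\Gamma)$. The right-hand side produces the observation term $s^3\lambda^4\int_{\omega_T}e^{-2s\alpha}\xi^3|u|^2$, which is controlled by $\|\partial_t y\|^2_{L^2(\omega_{t_0,T})}$ up to a constant depending on $s,\lambda$ (now fixed), plus $C\int_{\Omega_T}e^{-2s\alpha}|f_t|^2 + C\int_{\Gamma_T}e^{-2s\alpha}|g_t|^2$. Using the admissibility bounds, these last two integrals are dominated by $C_0^2\int_{\Omega_T}e^{-2s\alpha}|f(T_0,x)|^2\,\d x\,\d t + C_0^2\int_{\Gamma_T}e^{-2s\alpha}|g(T_0,x)|^2\,\d S\,\d t$, and after integrating the bounded-in-time weight in $t$ this is $\le C\|f(T_0,\cdot)\|^2_{L^2(\Omega)} + C\|g(T_0,\cdot)\|^2_{L^2(\Gamma)}$. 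The crucial point is that for $s$ large these two terms can be absorbed: indeed from the original equations \eqref{eq5} evaluated at $t=T_0$ we have $f(T_0,x)=\partial_t y(T_0,x)-d\Delta y(T_0,x)$ and $g(T_0,x)=\partial_t y_\Gamma(T_0,x)-\delta\Delta_\Gamma y_\Gamma(T_0,x)+d\partial_\nu y(T_0,x)$; hence $\|f(T_0,\cdot)\|_{L^2(\Omega)}+\|g(T_0,\cdot)\|_{L^2(\Gamma)}\le C(\|u(T_0,\cdot)\|_{\mathbb{L}^2}+\|Y(T_0,\cdot)\|_{\mathbb{H}^2})$, and $\|u(T_0,\cdot)\|_{\mathbb{L}^2}$ is in turn bounded, via an energy estimate for the time-differentiated system run backward/forward from $t=T_0$ together with the left-hand side of the Carleman inequality, by the Carleman left-hand side itself — so for $s\ge s_2$ it gets absorbed.

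The third step is the energy (dissipation) argument that converts the weighted interior estimate on $(u,u_\Gamma)$ at general times into a bound on $\|f\|_{\mathbb{L}^2_T}$. One writes, for a.e.\ $t$, $f(t,x)=f(T_0,x)+\int_{T_0}^t f_t(\tau,x)\,\d\tau$, so $|f(t,x)|^2\le 2|f(T_0,x)|^2 + 2|t-T_0|\int_0^T|f_t|^2\,\d\tau$ (similarly for $g$); integrating over $\Omega_T$ and $\Gamma_T$ reduces everything to (a) $\|f(T_0,\cdot)\|_{L^2(\Omega)}^2+\|g(T_0,\cdot)\|_{L^2(\Gamma)}^2$, handled as above by \eqref{eq5} at $T_0$ and the $\mathbb{H}^2$ norm of $Y(T_0,\cdot)$, and (b) $\|f_t\|^2_{L^2(\Omega_T)}+\|g_t\|^2_{L^2(\Gamma_T)}$, which by admissibility is again $\le C(\|f(T_0,\cdot)\|^2+\|g(T_0,\cdot)\|^2)$, closing the loop. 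Collecting constants and recalling that $s$ and $\lambda$ are now fixed numbers depending only on $(\Omega,\omega,T,t_0,C_0)$ yields \eqref{eq3.71}.

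I expect the main obstacle to be the absorption step: carefully arranging that the term $\|f(T_0,\cdot)\|_{L^2(\Omega)}+\|g(T_0,\cdot)\|_{L^2(\Gamma)}$, which one wants on the left, does not prevent closing the estimate. This requires expressing it through $\partial_t y(T_0,\cdot)$ and the $\mathbb{H}^2$ data, then bounding $\|\partial_t y(T_0,\cdot)\|_{\mathbb{L}^2}$ by the Carleman left-hand side using a parabolic energy identity for the time-differentiated system — a step where the dynamic boundary condition contributes extra boundary integrals (involving $\partial_\nu u$ and $\nabla_\Gamma u_\Gamma$) that must be handled using the $s\lambda\int_{\Gamma_T}e^{-2s\alpha}\xi|\partial_\nu\varphi|^2$ and $s\lambda\int_{\Gamma_T}e^{-2s\alpha}\xi|\nabla_\Gamma\varphi_\Gamma|^2$ terms already present on the left-hand side of Lemma~\ref{lem:carleman}. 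The time cutoff near $t=0$ must also be done so that the cutoff derivative hitting $(u,u_\Gamma)$ is supported where $e^{-2s\alpha}$ is exponentially small, so those commutator terms are negligible; this is routine but is where the hypothesis $t_0>0$ and the choice $T_0=(T+t_0)/2$ are used.
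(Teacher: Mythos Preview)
Your overall strategy---differentiate in time, apply the interior Carleman estimate of Lemma~\ref{lem:carleman} to $(u,u_\Gamma)=(\partial_t y,\partial_t y_\Gamma)$, use admissibility to replace $(f_t,g_t)$ by $(f(T_0,\cdot),g(T_0,\cdot))$, and recover the latter from the equation at $t=T_0$---is exactly the Bukhgeim--Klibanov scheme the paper follows (it carries out the boundary analogue, Theorem~\ref{thmstab}, in full and refers Theorem~\ref{thmstab1} to the same computation with Lemma~\ref{lem:carleman} in place of Lemma~\ref{car3}).

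There is, however, a real gap at the absorption step. You bound the source contribution on the right by the \emph{unweighted} quantity $C\|f(T_0,\cdot)\|_{L^2(\Omega)}^2+C\|g(T_0,\cdot)\|_{L^2(\Gamma)}^2$ with $C$ independent of $s$, and then propose to absorb it by controlling $\|u(T_0,\cdot)\|_{\mathbb{L}^2}$ through the Carleman left-hand side. But the Carleman left-hand side is \emph{weighted} by $e^{-2s\alpha}$; passing from weighted to unweighted at $t=T_0$ costs a factor $e^{2s\max_x\alpha(T_0,x)}$, which swamps any $1/s$ gain and the loop never closes. The paper avoids this by keeping the weight throughout: one uses the key monotonicity $\alpha(t,x)\ge\alpha(T_0,x)$ (valid because, after rescaling, $T_0=T/2$ is the time midpoint) to get
\[
\int_{\Omega_T}e^{-2s\alpha}|f(T_0,x)|^2\,\d x\,\d t\le C\int_\Omega e^{-2s\alpha(T_0,x)}|f(T_0,x)|^2\,\d x,
\]
and then compares this \emph{weighted} slice norm directly with the weighted $\int_\Omega|z(T_0,x)|^2e^{-2s\alpha(T_0,x)}\,\d x$, so the exponential factors cancel and only $1-\frac{C}{s}$ remains.

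The second deviation is in how $\int_\Omega|z(T_0)|^2e^{-2s\alpha(T_0)}\,\d x$ is controlled. You anticipate a ``parabolic energy identity for the time-differentiated system'' producing boundary integrals in $\partial_\nu u$ and $\nabla_\Gamma u_\Gamma$ to be absorbed by the boundary terms of Lemma~\ref{lem:carleman}. The paper's device is simpler and generates no such terms: one writes
\[
\int_\Omega |z(T_0,x)|^2 e^{-2s\alpha(T_0,x)}\,\d x=\int_{0}^{T_0}\partial_t\Big(\int_\Omega |z|^2 e^{-2s\alpha}\,\d x\Big)\,\d t
\]
(the boundary term at $t=0$ vanishes since $e^{-2s\alpha}\to0$ there after the rescaling $t_0\mapsto 0$), expands, uses $|\partial_t\alpha|\le C\xi^2$ and Young's inequality $2|z|\,|\partial_t z|\le s^{-2}\xi^{-1}|\partial_t z|^2+s^2\xi^3|z|^2$; both pieces are then $\frac{1}{s}$ times terms already on the Carleman left-hand side. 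The same identity on $\Gamma$ handles $z_\Gamma$. No spatial integration by parts, no use of the PDE, and hence no dynamic-boundary contributions enter this step---the difficulty you flagged does not actually arise. With these two adjustments (keep the Carleman weight in the absorption, and replace the energy identity by the purely temporal fundamental-theorem-of-calculus trick) your outline coincides with the paper's proof.
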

We refer to the next section for the key ideas of the proof.

\begin{remark}
It should be emphasized that the above stability estimate only uses one measurement component of the solution $Y$, localized in the interior of $\Omega$, namely $y|_{\omega_{t_0,T}}$. This is possible thanks to the internal Carleman estimate with source terms and some trace theorems.
\end{remark}

\subsection{Boundary measurement}
Here, we seek the determination of unknown source terms $(f,g)$ in \eqref{eq5} from boundary measurements
$$y(t,x)|_{\gamma_{t_0,T}}, \quad \partial_\nu y(t,x)|_{\gamma_{t_0,T}} \quad \text{ and } \quad Y(T_0,\cdot).$$
We will prove a new Lipschitz stability estimate using the Carleman estimate of Lemma \ref{car3} and following the Bukhgeim-Klibanov method \cite{BK81}; see also \cite{IY98}.
\begin{theorem} \label{thmstab} 
There exists a positive constant $C=C\left(\Omega, \gamma,T,t_0,C_0\right)$ such that, for any admissible source $\mathcal{F}=(f,g) \in \mathcal{S}\left(C_0\right)$, we have
\begin{equation}\label{eq3.7}
\|(f,g)\|_{\mathbb{L}^{2}_T} \leq C\left(\|\partial_t y_\Gamma\|_{L^2(\gamma_{t_0, T})}+ \|\partial_t\partial_\nu y\|_{L^2\left(\gamma_{t_0, T}\right)}+\left\|Y(T_0,\cdot)\right\|_{\mathbb{H}^2}\right),
\end{equation}
where $Y :=\left(y,y_\Gamma\right)$ is the mild solution of \eqref{eq5}.
\end{theorem}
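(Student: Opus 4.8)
## Proof proposal for Theorem \ref{thmstab}

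The plan is to follow the Bukhgeim--Klibanov strategy \cite{BK81,IY98}, combining the Carleman estimate with source terms of Lemma \ref{car3} with the time-differentiation trick adapted to dynamic boundary conditions. First I would differentiate the system \eqref{eq5} with respect to $t$ and set $z=\partial_t y$, $z_\Gamma=\partial_t y_\Gamma$; since $(f,g)\in \mathcal{S}(C_0)$ this pair solves a system of the form $\eqref{eq5}$ with source $(\partial_t f,\partial_t g)$, which is pointwise controlled by $(|f(T_0,\cdot)|,|g(T_0,\cdot)|)$. The overdetermination on $\gamma_{t_0,T}$ gives access to $\partial_t y_\Gamma$ and $\partial_t\partial_\nu y$ on $\gamma_{t_0,T}$, exactly the boundary data appearing on the right-hand side of Lemma \ref{car3}. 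I would apply Lemma \ref{car3} to $(z,z_\Gamma)$ on the time interval $(t_0,T)$ (or rather a symmetric interval around $T_0$ after a standard cutoff in time, because the weights $\alpha,\xi$ blow up at the endpoints), absorbing the source contributions on the left by $\mathcal{S}(C_0)$ provided $s$ is taken large; this yields a weighted estimate of $\int e^{-2s\alpha}\xi^{3}(|z|^2+|z_\Gamma|^2)$ by the two boundary observation terms plus the weighted norm of $(f(T_0,\cdot),g(T_0,\cdot))$.

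The second step is the Bukhgeim--Klibanov energy identity: I would write, for a.e.\ $x$,
\begin{equation*}
|f(T_0,x)|^2 = \Big| f(t_0,x) + \int_{t_0}^{T_0} \partial_t f(t,x)\,\d t \Big|^2,
\end{equation*}
and more usefully estimate $|f(T_0,x)|^2$ by evaluating $\partial_t\big(e^{-2s\alpha(t,x)}|z(t,x)|^2\big)$ and integrating in $t$ from $t_0$ to $T_0$, using that $z(T_0,x)=\partial_t y(T_0,x)$ is determined by the PDE from the known data $Y(T_0,\cdot)\in\mathbb{H}^2$, namely $\partial_t y(T_0,\cdot)=d\Delta y(T_0,\cdot)+f(T_0,\cdot)$ and $\partial_t y_\Gamma(T_0,\cdot)=\delta\Delta_\Gamma y_\Gamma(T_0,\cdot)-d\partial_\nu y(T_0,\cdot)+g(T_0,\cdot)$. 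Integrating the identity $\partial_t(e^{-2s\alpha}|z|^2)=-2s(\partial_t\alpha)e^{-2s\alpha}|z|^2+2e^{-2s\alpha}z\,\partial_t z$ over $(t_0,T_0)\times\Omega$ and over $(t_0,T_0)\times\Gamma$, and using $\partial_t z=\partial_t^2 y$ controlled via the equation for $z$ together with $|\partial_t f|\le C_0|f(T_0,\cdot)|$, I obtain a lower bound for a weighted $\mathbb{L}^2$-norm of $(f(T_0,\cdot),g(T_0,\cdot))$ by the Carleman left-hand side (evaluated at $T_0$) plus terms involving $Y(T_0,\cdot)$ in $\mathbb{H}^2$. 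Combining with the Carleman estimate of step one, for $s$ fixed large enough the weighted norm of $(f(T_0,\cdot),g(T_0,\cdot))$ is absorbed, leaving
\begin{equation*}
\|(f(T_0,\cdot),g(T_0,\cdot))\|_{\mathbb{L}^2}^2 \le C\big(\|\partial_t y_\Gamma\|_{L^2(\gamma_{t_0,T})}^2+\|\partial_t\partial_\nu y\|_{L^2(\gamma_{t_0,T})}^2+\|Y(T_0,\cdot)\|_{\mathbb{H}^2}^2\big).
\end{equation*}

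The final step transfers this bound at $t=T_0$ to the full space-time norm $\|(f,g)\|_{\mathbb{L}^2_T}$. Here I use again the admissibility condition: from $|\partial_t f(t,x)|\le C_0|f(T_0,x)|$ one integrates in time to get $|f(t,x)|\le |f(T_0,x)|+C_0 T|f(T_0,x)| \lesssim |f(T_0,x)|$ uniformly in $t$, hence $\|f\|_{L^2(\Omega_T)}\le C(T,C_0)\|f(T_0,\cdot)\|_{L^2(\Omega)}$, and similarly for $g$; combining with the previous display gives \eqref{eq3.7}.

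I expect the main obstacle to be the same structural difficulty flagged in Remark \ref{rmkbc}: in the boundary-observation setting one cannot close a Carleman estimate with an arbitrary $L^2$ source on $\Gamma_T$, which is why Lemma \ref{car3} is stated with the Dirichlet and Neumann traces already present as data. Technically this means that in step one I must carry along the extra boundary term $s\lambda\int_{\gamma_T}e^{-2s\alpha}\xi|\partial_\nu z|^2$ and control it by the measured quantity $\partial_t\partial_\nu y$ on $\gamma_{t_0,T}$; keeping track of the powers of $s$ and $\lambda$ so that, after the Bukhgeim--Klibanov step, everything at $t=T_0$ is genuinely absorbable is the delicate bookkeeping. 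A secondary subtlety is the regularity needed for $z=\partial_t y$ to belong to $\mathbb{E}_1$, i.e.\ to have well-defined traces $\partial_\nu z$ on $\Gamma_T$ and values at $t=T_0$ in $\mathbb{H}^2$; this requires $(f,g)\in H^1(0,T;\mathbb{L}^2)$ together with compatible initial data and the maximal $\mathbb{L}^2$-regularity of the operator $\mathcal{A}$ recalled in Section \ref{sec2}, possibly after a density argument reducing the general case to smooth solutions.
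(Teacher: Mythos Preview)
Your proposal is correct and follows essentially the same Bukhgeim--Klibanov route as the paper: differentiate \eqref{eq5} in time, apply the Carleman estimate of Lemma \ref{car3} to $(z,z_\Gamma)=(\partial_t y,\partial_t y_\Gamma)$, use the admissibility condition to replace $(\partial_t f,\partial_t g)$ by $(f(T_0,\cdot),g(T_0,\cdot))$, recover the weighted norm of the latter via the identity $\partial_t(e^{-2s\alpha}|z|^2)$ integrated up to $T_0$ together with the equations at $t=T_0$, absorb for large $s$, and finally pass from $(f(T_0,\cdot),g(T_0,\cdot))$ to $\|(f,g)\|_{\mathbb{L}^2_T}$ by admissibility again. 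The only superfluous step in your outline is the time cutoff: the paper simply rescales so that $t_0=0$ and $T_0=T/2$, and since $e^{-2s\alpha}$ vanishes at the endpoints no cutoff is required.
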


\begin{remark}
In contrast to the interior measurement case in Theorem \ref{thmstab1}, we have used two lateral Cauchy data for the boundary measurement case. This is due to the restriction on the boundary Carleman estimate as seen in Remark \ref{rmkbc}. See Section \ref{sec4} for further comments.
\end{remark}

\begin{proof}[Proof of Theorem \ref{thmstab}]
Rescaling time, it is sufficient to prove the estimate \eqref{eq3.7} in the case $T_0=\frac{T}{2}$. Throughout the proof, $C$ will denote a generic constant that is independent of $Y$. The terms appearing in \eqref{eq3.7} are well defined, since $Y:=(y,y_{\Gamma})\in \mathbb{E}_1$. The function $(z,z_\Gamma)=(\partial_t y, \partial_t y_\Gamma)$, where $(y,y_\Gamma)$ is the solution of \eqref{eq5}, satisfies the following system
\begin{empheq}[left = \empheqlbrace]{alignat=2}
\begin{aligned}
&\partial_t z - d\Delta z = f_t(t,x), &\quad\text{in } \Omega_T , \\
&\partial_t z_{\Gamma} - \delta \Delta_\Gamma z_\Gamma +d\partial_{\nu} z = g_t(t,x), &\quad\text{on } \Gamma_T, \\
&z_{\Gamma} = z_{|\Gamma}, &\quad\text{on } \Gamma_T, \label{eq3.8to3.10}
\end{aligned}
\end{empheq}
and we have
\begin{empheq}[left = \empheqlbrace]{alignat=2}
& z\left(T_0\right) - d\Delta y(T_0) = f\left(T_0\right), \label{eq3.11}\\
& z_{\Gamma}\left(T_0\right) -\delta \Delta_\Gamma y_{\Gamma}\left(T_0\right) + d\partial_{\nu} y\left(T_0\right)= g\left(T_0\right).\label{eq3.12}
\end{empheq}
Since $(f,g)\in H^1\left(0,T; \mathbb{L}^2\right)$, by the maximal regularity we have $\left(z, z_\Gamma\right) \in \mathbb{E}_1\left(t_0, T\right)$. Hence, we may apply the Carleman estimate of Lemma \ref{car3} to \eqref{eq3.8to3.10}; we obtain
\begin{small}
\begin{align}
& \int_{\Omega_{t_0,T}} \left(\frac{1}{s\xi} |\partial_t z|^2 + s^3\lambda^4 \xi^3 |z|^2 \right)\mathrm{e}^{-2s\alpha} \d x \d t + \int_{\Gamma_{t_0,T}} \left(\frac{1}{s\xi} |\partial_t z_\Gamma|^2 + s^3\lambda^3\xi^3 |z_\Gamma|^2 \right)\mathrm{e}^{-2s\alpha} \d S \d t \nonumber\\
& \quad\leq C s^3\lambda^4\int_{\gamma_{t_0,T}} \mathrm{e}^{-2s\alpha} \xi^3|z_\Gamma|^2 \d S \d t + C s\lambda\int_{\gamma_{t_0,T}} \mathrm{e}^{-2s\alpha} \xi |\partial_\nu z|^2 \d S \d t\nonumber\\
&\quad +C \int_{\Omega_{t_0,T}} \mathrm{e}^{-2s\alpha}|f_t|^2 \d x \d t + C \int_{\Gamma_{t_0,T}} \mathrm{e}^{-2s\alpha} |g_t|^2 \d S \d t \label{eqq3.12}
\end{align}
\end{small}
for any $s>0$ large enough.
Since $\mathcal{F}=(f,g)\in \mathcal{S}\left(C_0\right)$, we have
\begin{small}
\begin{align}
& \int_{\Omega_{t_0,T}} \left(\frac{1}{s\xi} |\partial_t z|^2 + s^3\lambda^4\xi^3 |z|^2 \right)\mathrm{e}^{-2s\alpha} \d x \d t + \int_{\Gamma_{t_0,T}} \left(\frac{1}{s\xi} |\partial_t z_\Gamma|^2 + s^3\lambda^3\xi^3 |z_\Gamma|^2 \right)\mathrm{e}^{-2s\alpha} \d S \d t \nonumber\\
& \quad\leq C s^3\lambda^4\int_{\gamma_{t_0,T}} \mathrm{e}^{-2s\alpha} \xi^3|z_\Gamma|^2 \d S \d t + C s\lambda\int_{\gamma_{t_0,T}} \mathrm{e}^{-2s\alpha} \xi|\partial_\nu z|^2 \d S \d t\nonumber\\
& \quad +C \int_{\Omega_{t_0,T}} \mathrm{e}^{-2s\alpha}\left|f\left(T_0,x\right)\right|^2 \,\d x\,\d t + C \int_{\Gamma_{t_0,T}} \mathrm{e}^{-2s\alpha} \left|g\left(T_0,x\right)\right|^2 \,\d S\,\d t. \label{eq3.13}
\end{align}
\end{small}
From \eqref{eq3.11}-\eqref{eq3.12}, we can estimate the term
\begin{equation*}
\int_\Omega \left|f\left(T_0,x\right)\right|^2 \mathrm{e}^{-2s\alpha\left(T_0,x\right)} \,\d x + \int_\Gamma \left|g\left(T_0,x\right)\right|^2 \mathrm{e}^{-2s\alpha\left(T_0,x\right)} \,\d S,
\end{equation*}
by estimating the term
\begin{equation*}
\int_\Omega \left|z\left(T_0,x\right)\right|^2 \mathrm{e}^{-2s\alpha\left(T_0,x\right)} \,\d x + \int_\Gamma \left|z_\Gamma\left(T_0,x\right)\right|^2 \mathrm{e}^{-2s\alpha\left(T_0,x\right)} \,\d S.
\end{equation*}
We have
\begin{align*}
&\int_\Omega \left|z\left(T_0,x\right)\right|^2 \mathrm{e}^{-2s\alpha\left(T_0,x\right)} \,\d x = \int_{t_0}^{T_0} \frac{\partial}{\partial t} \left(\int_\Omega \left|z\left(t,x\right)\right|^2 \mathrm{e}^{-2s\alpha(t,x)} \,\d x\right)\,\d t\\
&= \int_{t_0}^{T_0} \int_\Omega \left(2\partial_t y(t,x)\partial_t^2 y(t,x)- 2s(\partial_t \alpha)|\partial_t y(t,x)|^2 \right) \mathrm{e}^{-2s\alpha} \,\d x\,\d t\\
&\leq \int_{\Omega_{t_0,T}} \left(2|\partial_t y(t,x)|\,|\partial_t^2 y(t,x)| +Cs\xi^2 |\partial_t y(t,x)|^2 \right) \mathrm{e}^{-2s\alpha} \,\d x\,\d t,
\end{align*}
where we employed $|\partial_t \alpha| \leq C\xi^2$. By Young's inequality, we have
\begin{align*}
2|\partial_t y(t,x)|\,|\partial_t^2 y(t,x)| &\leq C\left(\frac{1}{s^2\xi} |\partial_t^2 y(t,x)|^2 + s^2 \lambda^4\xi^3 |\partial_t y(t,x)|^2\right)
\end{align*}
for large $\lambda$, using $\xi\leq C\xi^2$.
Hence,
\begin{equation}\label{E1}
\int_\Omega \left|z\left(T_0,x\right)\right|^2 \mathrm{e}^{-2s\alpha\left(T_0,x\right)} \,\d x \leq C\int_{\Omega_{t_0,T}} \left(\frac{1}{s^2\xi} |\partial_t^2 y|^2 + s^2\lambda^4\xi^3 |\partial_t y|^2 \right)\mathrm{e}^{-2s\alpha} \,\d x\,\d t.
\end{equation}
Similarly, we obtain
\begin{equation}\label{E2}
\int_\Gamma \left|z_\Gamma\left(T_0,x\right)\right|^2 \mathrm{e}^{-2s\alpha\left(T_0,x\right)} \,\d S \leq C\int_{\Gamma_{t_0,T}} \left(\frac{1}{s^2\xi} |\partial_t^2 y_\Gamma|^2 + s^2\lambda^3\xi^3 |\partial_t y_\Gamma|^2 \right)\mathrm{e}^{-2s\alpha} \,\d S\,\d t
\end{equation}
for large $\lambda$.
Using \eqref{eq3.13}-\eqref{E2}, we obtain
\begin{align}
& \int_\Omega \left|z\left(T_0,x\right)\right|^2 \mathrm{e}^{-2s\alpha\left(T_0,x\right)} \,\d x + \int_\Gamma \left|z_\Gamma\left(T_0,x\right)\right|^2 \mathrm{e}^{-2s\alpha\left(T_0,x\right)} \,\d S \nonumber\\
& \leq C\int_{\Omega_{t_0,T}} \left(\frac{1}{s^2\xi} |\partial_t^2 y|^2 + s^2\lambda^4\xi^3 |\partial_t y|^2 \right)\mathrm{e}^{-2s\alpha} \,\d x\,\d t \nonumber\\
& \quad + C\int_{\Gamma_{t_0,T}} \left(\frac{1}{s^2\xi} |\partial_t^2 y_\Gamma|^2 + s^2\lambda^3\xi^3 |\partial_t y_\Gamma|^2 \right)\mathrm{e}^{-2s\alpha} \,\d S\,\d t \nonumber\\
& \leq \frac{C}{s} \int_{\Omega_{t_0,T}} \mathrm{e}^{-2s\alpha} \left|f\left(T_0,x\right)\right|^2 \,\d x\,\d t + \frac{C}{s} \int_{\Gamma_{t_0,T}} \mathrm{e}^{-2s\alpha} \left|g\left(T_0,x\right)\right|^2 \,\d S\,\d t \nonumber\\
& \quad + \,Cs^2\lambda^4 \int_{\gamma_{t_0,T}} \mathrm{e}^{-2s\alpha}\xi^3 |\partial_t y|^2 \,\d S\,\d t +  C\lambda \int_{\gamma_{t_0,T}} \mathrm{e}^{-2s\alpha}\xi |\partial_t\partial_\nu y|^2 \,\d S\,\d t\label{EE2}
\end{align}
It can be seen that
\begin{align}
&\int_\Omega \left|\Delta y\left(T_0,\cdot\right)\right|^2 \mathrm{e}^{-2s\alpha\left(T_0,\cdot\right)} \,\d x \leq C \left\Vert y\left(T_0,\cdot\right)\right\Vert_{H^2\left(\Omega\right)}^2, \label{EE1}\\
&\int_\Gamma \left|\Delta_\Gamma  y_\Gamma\left(T_0,\cdot\right)\right|^2\mathrm{e}^{-2s\alpha\left(T_0,\cdot\right)} \,\d S + \int_\Gamma \left|\partial_\nu y\left(T_0,\cdot\right)\right|^2 \mathrm{e}^{-2s\alpha\left(T_0,\cdot\right)} \,\d S \nonumber\\
&\leq C\left\Vert Y\left(T_0,\cdot\right)\right\Vert_{\mathbb{H}^2}^2 \label{EEE2},
\end{align}
using the trace theorem. By \eqref{EE2} and $\alpha(t,x)\geq \alpha(T_0,x)$ for all $(t,x)\in \overline{\Omega}_{T}$, we can deduce
\begin{align}
&\left(1-\frac{C}{s}\right)\left(\int_{\Omega} \mathrm{e}^{-2s\alpha\left(T_0,x\right)} \left|f\left(T_0,x\right)\right|^2 \,\d x + \int_{\Gamma} \mathrm{e}^{-2s\alpha\left(T_0,x\right)} \left|g\left(T_0,x\right)\right|^2 \,\d S \right)\nonumber\\
& \leq C\left\Vert Y\left(T_0,\cdot\right)\right\Vert_{\mathbb{H}^2}^2+ C\lambda \int_{\gamma_{t_0,T}} \mathrm{e}^{-2s\alpha}\xi \left(s^2\lambda^3\xi^2|\partial_t y_\Gamma|^2 + |\partial_t\partial_\nu y|^2\right) \,\d S\,\d t \label{EE5}
\end{align}
Since $(f,g)\in \mathcal{S}(C_0)$, we infer that
\begin{align*}
\begin{aligned}
|f(t,x)|\leq \left|f\left(T_0,x\right)\right|+ \left|\int_{T_0}^t f_\tau(\tau,x) \,\d \tau \right| \leq C\left|f\left(T_0,x\right)\right| \; \text{ for a.e } (t,x)\in \Omega_T ,\\
|g(t,x)|\leq \left|g\left(T_0,x\right)\right|+ \left|\int_{T_0}^{t} g_\tau(\tau,x) \,\d \tau \right| \leq C\left|g\left(T_0,x\right)\right| \; \text{ for a.e } (t,x)\in \Gamma_T .
\end{aligned}
\end{align*}
Plugging the above inequalities in \eqref{EE5} and fixing $\lambda,s>0$ sufficiently large, the proof of Theorem \ref{thmstab} is achieved.
\end{proof}

\section{Conclusion and open problems}\label{sec4}
This section is devoted to certain concluding comments and open questions that deserve future investigation.

\subsection*{Open problem 4.1 (The case $\delta=0$)}
The null controllability of the higher-dimensional system \eqref{eq1} with dynamic boundary conditions and without surface diffusion, i.e. $N\ge2$ and $\delta=0,$ is an open problem. More precisely, the distributed null controllability of the system
\begin{equation}\label{zero}
\begin{cases}
\partial_t y-\Delta y=\mathds{1}_\omega(x) v(t,x)& \qquad \mbox{ in } \Omega_T,\\
\partial_t y_\Gamma+\partial_\nu y=0 & \qquad \mbox{ on } \Gamma_T, \\
y_\Gamma=y_{|\Gamma} & \qquad\;\text{on } \Gamma_T, \\
(y,y_\Gamma)|_{t=0}  = (y_0, y_{0,\Gamma}) & \qquad \mbox{ in } \Omega\times\Gamma.
\end{cases}
\end{equation}
Using the Carleman estimate approach, the problematic term
$$s\lambda\int_{\Gamma_T} e^{-2s\alpha}\xi |\partial_\nu \eta| \, |\nabla_\Gamma \varphi_\Gamma|^2\,\d S\,\d t$$
could be absorbed thanks to the surface diffusion term $\delta \Delta_\Gamma \varphi_\Gamma$ when $\delta>0$. Also, in the one-dimensional case $N=1$, this term cancels, and the null controllability of \eqref{zero} holds. The previous Carleman-type techniques do not work when $N\ge2$ and $\delta=0$. Furthermore, the regularity estimates technique fails, since the solution of system \eqref{zero} has only interior regularity of order $H^{\frac{3}{2}}(\Omega)$; in contrast to the case of boundary diffusion ($\delta>0$) where the solution attains $H^4(\Omega)\times H^4(\Gamma)$-regularity. Nevertheless, we expect that distributed null controllability will be maintained. Moreover, similar comments hold for the boundary null controllability problem.

\subsection*{Open problem 4.2 (Boundary Carleman estimate)}
In Lemma \ref{car2}, we have seen a boundary Carleman estimate for the homogeneous system without potentials and source terms. The regularity estimates technique does not allow us to obtain a general Carleman estimate with $L^2$ source terms. Such an estimate would be of much interest, as it implies Lipschitz boundary stability with one observation, in contrast to Theorem \ref{thmstab}.

\subsection*{Open problem 4.3 (General diffusion matrices)}
Consider the system with variable diffusion coefficients
\begin{empheq}[left = \empheqlbrace]{alignat=2}
\begin{aligned}
&\partial_t y - \dv\left(A(t,x) \nabla y\right) = f(t,x), &\text{in } \Omega_T , \\
&\partial_t y_{\Gamma} - \dv_{\Gamma}\left(D(t,x)\nabla_\Gamma y_{\Gamma}\right) +\partial_{\nu}^A y = g(t,x), &\text{on } \Gamma_T, \\
&y_{\Gamma}(t,x) = y_{|\Gamma}(t,x), &\text{on } \Gamma_T, \\
&\left(y,y_{\Gamma}\right)\rvert_{t=0}=\left(y_0, y_{0,\Gamma}\right),   &\Omega\times\Gamma, \label{eq1to4}
\end{aligned}
\end{empheq}
where the diffusion matrices $A$ and $D$ are symmetric and uniformly elliptic with regular coefficients. The conormal derivative with respect to $A$ is given by
$$\partial_{\nu}^A y(t,x) :=\sum\limits_{i,j=1}^N a_{ij}(t,x) \left(\partial_i y\right)_{|\Gamma}\nu_j(x).$$
Most of the results presented above hold for the general system \eqref{eq1to4} provided the assumption
\begin{equation}
    D(t,x): T_x\Gamma \rightarrow T_x\Gamma,
\end{equation}
where $T_x\Gamma$ is the tangent space to $\Gamma$ at $x$. This assumption enables us to simplify some boundary terms using the surface divergence formula
\begin{equation*}
\int_\Gamma \left(\dv_\Gamma X\right)z \,\d S =- \int_\Gamma X \cdot \nabla_{\Gamma} z \,\d S, \qquad z\in H^1\left(\Gamma\right),
\end{equation*}
when $X$ is any $H^1$ (tangential) vector field on $\Gamma$. However, when $D(t,x): T_x\Gamma \rightarrow \mathbb{R}^N$ is a general function, one needs to use the generalized divergence formula (see Proposition 5.4.9 in \cite{HP18})
\begin{equation*}
    \int_{\Gamma} \left(\dv_{\Gamma} X\right) z\, \d S=-\int_{\Gamma} X \cdot \nabla_{\Gamma} z \,\d S+\int_{\Gamma} (\dv_\Gamma\nu)(X \cdot \nu) z \,\d S
\end{equation*}
for $X\in H^1(\Gamma)^N$, which entails some new boundary terms that need to be absorbed.

\subsection*{Open problem 4.4 (General boundary coupling)}
The null controllability of systems with general dynamic boundary conditions and Robin (or Fourier) coupling at the boundary are more challenging. A prototype model is given by
\begin{equation}\label{zero1}
\begin{cases}
\partial_t y-\Delta y=\mathds{1}_\omega(x) v(t,x)& \qquad \mbox{ in } \Omega_T,\\
\partial_t y_\Gamma -\Delta_\Gamma y_\Gamma+\partial_\nu y=0 & \qquad \mbox{ on } \Gamma_T, \\
\sigma y_\Gamma - y_{|\Gamma}-\kappa \partial_{\nu} y=0 & \qquad\;\text{on } \Gamma_T, \\
(y,y_\Gamma)|_{t=0}  = (y_0, y_{0,\Gamma}) & \qquad \mbox{ in } \Omega\times\Gamma,
\end{cases}
\end{equation}
with constants $\sigma \in \mathbb{R}$ and $\kappa \ge 0$. Robin boundary coupling appears, for example, in bulk-surface Allen–Cahn equations; see \cite{CFL19} and \cite{KL21}. Note that for $\kappa=0$ and $\sigma=1$, one recovers the Dirichlet coupling $y_\Gamma = y_{|\Gamma}$ that has been actively studied in the last years in the context of null controllability. In the Robin coupling case, some new terms on the boundary are difficult to absorb with the Carleman weights given by \eqref{w1}.

\newpage


\begin{thebibliography}{99}

\bibitem{ACM22}
Ait Ben Hassi, E.M., Chorfi, S.E., Maniar, L.: An inverse problem of radiative potentials and initial temperatures in parabolic equations with dynamic boundary conditions. J. Inverse Ill-Posed Probl. \textbf{30}, 363–378 (2022).

\bibitem{ACM22S}
Ait Ben Hassi, E.M., Chorfi, S.E., Maniar, L.: Stable determination of coefficients in semilinear parabolic system with dynamic boundary conditions. Inverse Problems. \textbf{38}, 115007 (2022).

\bibitem{ACMO21}
Ait Ben Hassi, E.M., Chorfi, S.E., Maniar, L., Oukdach, O.: Lipschitz stability for an inverse source problem in anisotropic parabolic equations with dynamic boundary conditions. Evol. Equ. Control Theo. \textbf{10}, 837–859 (2021).

\bibitem{ACM22I}
Ait Ben Hassi, E.M., Chorfi, S.E., Maniar, L.: Identification of source terms in heat equation with dynamic boundary conditions. Math. Meth. Appl. Sci. \textbf{45}, 2364–2379 (2022).

\bibitem{BBEM23}
Baroun, M., Boulite, S., Elgrou, A., Maniar, L.: Null controllability for stochastic parabolic equations with dynamic boundary conditions. J. Dyn. Control Syst. \textbf{29}, 1727–1756 (2023).

\bibitem{BEOB23}
Baroun, M., El Baggari, H., Ouled Driss, I., Boulite, S.: Impulse controllability for the heat equation with inverse square potential and dynamic boundary conditions. IMA J. Math. Cont. Information. \textbf{40}, 353–384 (2023).

\bibitem{BFBI23}
Bazán, F. SV., Bedin, L., Ismailov, M.I., Borges, L.S.: Inverse time-dependent source problem for the heat equation with a nonlocal Wentzell-Neumann boundary condition. Networks \& Heterogeneous Media. \textbf{18}, 1747–1771 (2023).

\bibitem{BCMO21}
Boutaayamou, I., Chorfi, S.E., Maniar, L., Oukdach, O.: The cost of approximate controllability of heat equation with general dynamical boundary conditions. Portugaliae Mathematica. \textbf{78}, 65–99 (2021).

\bibitem{BMO220}
Boutaayamou, I., Maniar, L., Oukdach, O.: Stackelberg-Nash null controllability of heat equation with general dynamic boundary conditions. Evol. Equ. Control Theo. \textbf{11}, 1285–1307 (2022).

\bibitem{BMO22}
Boutaayamou, I., Maniar, L., Oukdach, O.: Time and norm optimal controls for the heat equation with dynamical boundary conditions. Math. Meth. Appl. Sci. \textbf{45}, 1359–1376 (2022).

\bibitem{BK81}
Bukhgeim, A.L., Klibanov, M.V.: Global uniqueness of class of multidimensional inverse problems. Soviet Math. Dokl. \textbf{24}, 244–247 (1981).

\bibitem{CMM23}
Carreño, N., Mercado, A., Morales, R.: Local null controllability of a cubic Ginzburg-Landau equation with dynamic boundary conditions. arXiv: 2301.03429 (2023).

\bibitem{CGKM23}
Chorfi, S.E., El Guermai, G., Khoutaibi, A., Maniar, L.: Boundary null controllability for the heat equation with dynamic boundary conditions. Evol. Equ. Control Theo. \textbf{12}, 542–566 (2023).

\bibitem{CGMZ23F}
Chorfi, S.E., El Guermai, G., Maniar, L., Zouhair, W.: Finite-time stabilization and impulse control of heat equation with dynamic boundary conditions. J. Dyn. Control Systems. \textbf{29}, 1585–1615 (2023).

\bibitem{CGMZ23I}
Chorfi, S.E., El Guermai, G., Maniar, L., Zouhair, W.: Impulse null approximate controllability for heat equation with dynamic boundary conditions. Math. Cont. Rel. Fields. \textbf{13}, 1023–1046 (2023).

\bibitem{CGMZ23L}
Chorfi, S.E., El Guermai, G., Maniar, L., Zouhair, W.: Lipschitz stability for an inverse source problem of the wave equation with kinetic boundary conditions. arXiv: 2402.12902 (2024).

\bibitem{CGMZ22}
Chorfi, S.E., El Guermai, G., Maniar, L., Zouhair, W.: Logarithmic convexity and impulsive controllability for the one-dimensional heat equation with dynamic boundary conditions. IMA J. Math. Cont. Information. \textbf{39}, 861–891 (2022).

\bibitem{CGMZ23}
Chorfi, S.E., El Guermai, G., Maniar, L., Zouhair, W.: Numerical identification of initial temperatures in heat equation with dynamic boundary conditions. Mediterranean Journal of Mathematics. \textbf{20}, 256 (2023).

\bibitem{CFL19}
Colli, P., Fukao, T., Lam, K.F.: On a coupled bulk–surface Allen–Cahn system with an affine linear transmission condition and its approximation by a Robin boundary condition. Nonlinear Analysis. \textbf{184}, 116–147 (2019).

\bibitem{CG12}
Cornilleau, P., Guerrero, S.: Controllability and observability of an artificial advection–diffusion problem. Math. Control, Signals, and Systems. \textbf{24}, 265–294 (2012).

\bibitem{EMR}
ter Elst, A.F.M., Meyries, M., Rehberg, J.: Parabolic equations with dynamical boundary conditions and source terms on interfaces. Annali di Matematica Pura ed Applicata. \textbf{193}, 1295–1318 (2014).

\bibitem{Im95}
Emanuilov, O.Y.: Controllability of parabolic equations. Sbornik: Mathematics. \textbf{186}, 879 (1995).

\bibitem{Eg18}
Egger, H., Fellner, K., Pietschmann, J.F., Tang, B.Q.: Analysis and numerical solution of coupled volume-surface reaction–diffusion systems with application to cell biology. Appl. Math. Comput. \textbf{336}, 351–367 (2018).

\bibitem{Et24}
Et-Tahri, F., Chorfi, S.E., Maniar, L., Boutaayamou, I.: Null controllability of a volume-surface reaction-diffusion equation with dynamic boundary conditions. J. Math. Anal. Appl. \textbf{542}, 128793, (2025).

\bibitem{FH11}
Farkas, J.Z., Hinow, P.: Physiologically structured populations with diffusion and dynamic boundary conditions. Math. Biosci. Eng. \textbf{8}, 503–513 (2011).

\bibitem{FGGR} 
Favini, A., Goldstein, J.A., Goldstein, G.R., Romanelli, S.: The heat equation with generalized Wentzell boundary condition. J. Evol. Equ. \textbf{2}, 1–19 (2002).

\bibitem{FG06} 
Fern\'andez-Cara, E., Guerrero, S.: Global Carleman inequalities for parabolic systems and applications to controllability. SIAM J. Control Optim. \textbf{45}, 1395–1446 (2006).

\bibitem{FI96} 
Fursikov, A.V., Imanuvilov, O.Y.: Controllability of Evolution Equations, Lecture Note Series \textbf{34}, Research Institute of Mathematics, Seoul National University, Seoul, (1996).

\bibitem{Gal15}
Gal, C.G.: The role of surface diffusion in dynamic boundary conditions: Where do we stand? Milan J. Math. \textbf{83}, 237–278 (2015).

\bibitem{GT17}
Gal, C.G., Tebou, L.: Carleman inequalities for wave equations with oscillatory boundary conditions and application. SIAM J. Control Optim. \textbf{55}, 324–364 (2017).

\bibitem{Glitzky}
Glitzky, A.: An electronic model for solar cells including active interfaces and energy resolved defect densities. SIAM J. Math. Anal. \textbf{44}, 3874–3900 (2012).

\bibitem{GM13}
Glitzky, A., Mielke, A.: A gradient structure for systems coupling reaction-diffusion effects in bulk and interfaces. Z. angew. Math. Physik. \textbf{64}, 29–52 (2013).

\bibitem{Gold}
Goldstein, G.R.: Derivation and physical interpretation of general boundary conditions.  Adv. Diff. Equ. \textbf{11}, 457–480 (2006). 

\bibitem{HP18}
Henrot, A., Pierre, M.: Shape Variation and Optimization: A Geometrical Analysis. EMS Tracts in Mathematics, European Mathematical Society (2018).

\bibitem{Hi89} 
Hintermann, T.: Evolution equations with dynamic boundary conditions. Proceedings of the Royal Society of Edinburgh Section A: Mathematics. \textbf{113}, 43–60 (1989).
 
\bibitem{HKR} 
H\"omberg, D.,  Krumbiegel, K., Rehberg, J.: Optimal control of a parabolic equation with dynamic boundary condition. Appl. Math. Optim. \textbf{67}, 3–31 (2013).

\bibitem{IY98} 
Imanuvilov, O.Y., Yamamoto, M.: Lipschitz stability in inverse parabolic problems by the Carleman estimate. Inverse Problems. \textbf{14}, 1229–1245 (1998).

\bibitem{Is18}
Ismailov, M.I.: Inverse source problem for heat equation with nonlocal Wentzell boundary condition. Results in Mathematics. \textbf{73}, 1–11 (2018).

\bibitem{JM24}
Jakhoukh, M., Maniar, L.: Null controllability for parabolic systems with dynamic boundary conditions. Evol. Equ. Control Theo. \textbf{13}, 400–420 (2024).

\bibitem{KM20}
Khoutaibi, A., Maniar, L.: Null controllability for a heat equation with dynamic boundary conditions and drift terms. Evol. Equ. Control Theo. \textbf{9}, 535–559 (2020).

\bibitem{KMO22}
Khoutaibi, A., Maniar, L., Oukdach, O.: Null controllability for semilinear heat equation with dynamic boundary conditions. Discrete \& Continuous Dynamical Systems-Series S. \textbf{15}, 1525–1546 (2022).

\bibitem{KL21}
Knopf, P., Liu, C.: On second-order and fourth-order elliptic systems consisting of bulk and surface PDEs: Well-posedness, regularity theory and eigenvalue problems. Interfaces and Free Boundaries. \textbf{23}, 507–533 (2021).

\bibitem{KN04}
Kumpf, M., Nickel, G.: Dynamic boundary conditions and boundary control for the one-dimensional heat equation. J. Dyn. Control Systems. \textbf{10}, 213–225 (2004).

\bibitem{La32}
Langer, R.E.: A problem in diffusion or in the flow of heat for a solid in contact with a fluid. Tohoku Math. J. \textbf{35}, 260–275 (1932).

\bibitem{LMPZ23}
Lecaros, R., Morales, R., Pérez, A., Zamorano, S.: Discrete Carleman estimates and application to controllability for a fully-discrete parabolic operator with dynamic boundary conditions. J. Diff. Equ. \textbf{365}, 832–881 (2023).

\bibitem{MMS17}
Maniar, L., Meyries, M., Schnaubelt, R.: Null controllability for parabolic equations with dynamic boundary conditions. Evol. Equ. Control Theo. \textbf{6}, 381–407 (2017).

\bibitem{MM23}
Mercado, A., Morales, R.: Exact controllability for a Schrödinger equation with dynamic boundary conditions. SIAM J. Control Optim. \textbf{61}, 3501–3525 (2023). 

\bibitem{Ra24}
Rashedi, K.: Estimation of a heat source in a parabolic equation with nonlocal Wentzell boundary condition using a spectral technique. Indian J. Pure and Appl. Math. 1–14 (2024).

\bibitem{SCM24}
Santos, M.C., Carreño, N., Morales, R.: An insensitizing control problem involving tangential gradient terms for a reaction-diffusion equation with dynamic boundary conditions. arXiv: 2407.09882 (2024).

\bibitem{Sa20}
Sauer, N., Dynamic boundary conditions and the Carslaw-Jaeger constitutive relation in heat transfer. SN Partial Differ. Equ. Appl. \textbf{1}, 48 (2020).

\bibitem{Sl15}
Slodička, M.: A parabolic inverse source problem with a dynamical boundary condition. Appl. Math. Comp. \textbf{256}, 529–539 (2015).

\bibitem{SW} 
Sprekels, J., Wu, H.: A note on parabolic equation with nonlinear dynamical boundary condition. Nonlinear Analysis. \textbf{72}, 3028–3048 (2006). 

\bibitem{TW} 
Tucsnak, M., Weiss, G.: Observation and Control for Operator Semigroups. Birkh\"auser Verlag, Basel, 2009.

\bibitem{VV}
V\'azquez, J.L., Vitillaro, E.: Heat equation with dynamical boundary conditions of reactive-diffusive type. J. Diff. Equ. \textbf{250}, 2143–2161 (2011).

\bibitem{Ya09}
Yamamoto, M.: Carleman estimates for parabolic equations and applications. Inverse Problems. \textbf{25}, 123013 (2009).

\bibitem{ZYG19}
Zhang, M., Yin, J., Gao, H.: Insensitizing controls for the parabolic equations with dynamic boundary conditions. J. Math. Anal. Appl. \textbf{475}, 861–873 (2019).
    
\end{thebibliography}
\end{document}